\newcommand{\Z}{{\mathbb{Z}}}
\newcommand{\Q}{{\mathbb{Q}}}
\newcommand{\R}{{\mathbb{R}}}
\newcommand{\C}{{\mathbb{C}}}
\newcommand{\slm}{{\mathfrak{sl}}}
\newcommand{\fg}{{\mathfrak{g}}}
\newcommand{\fh}{{\mathfrak{h}}}
\newcommand{\be}{{\mathbf{e}}}
\newcommand{\cB}{{\mathcal{B}}}
\newcommand{\tcB}{{\tilde{\mathcal{B}}}}
\newcommand{\ovi}{{\underline{i}}}
\newcommand{\oalpha}{{\underline{\alpha}}}
\newcommand{\obeta}{{\underline{\beta}}}
\newcommand{\tfg}{{\tilde{\mathfrak{g}}}}
\newcommand{\tfh}{{\tilde{\mathfrak{h}}}}
\newcommand{\talpha}{{\tilde{\alpha}}}
\newcommand{\tbeta}{{\tilde{\beta}}}
\newcommand{\tgamma}{{\tilde{\gamma}}}
\newcommand{\tha}{{\tilde{h}}}
\newcommand{\te}{\tilde{e}}
\newcommand{\tf}{\tilde{f}}
\newcommand{\tq}{\tilde{q}}
\newcommand{\tI}{{\tilde{I}}}
\newcommand{\tPhi}{{\tilde{\Phi}}}
\newcommand{\tbe}{\tilde{\mathbf{e}}}
\newcommand{\heta}{{\hat{\eta}^\epsilon}}
\newcommand{\hgt}{{\operatorname{ht}}}
\newcommand{\sgn}{{\operatorname{sgn}}}
\renewcommand{\leq}{\leqslant}
\renewcommand{\geq}{\geqslant}
\newtheorem{thm}{Theorem}[section]
\newtheorem{lem}[thm]{Lemma}
\newtheorem{prop}[thm]{Proposition}
\newtheorem{cor}[thm]{Corollary}
\theoremstyle{definition}
\newtheorem{defn}[thm]{Definition}
\newtheorem{exmp}[thm]{Example}
\newtheorem{abs}[thm]{}
\theoremstyle{remark}
\newtheorem{rem}[thm]{Remark}
\numberwithin{equation}{section}
\begin{document}
\title{Canonical structure constants for simple Lie algebras}
\author{Meinolf Geck and Alexander Lang}
\address{Lehrstuhl f\"ur Algebra\\Universit\"at Stuttgart\\
Pfaffenwaldring 57\\D--70569 Stuttgart\\ Germany}
\curraddr{}
\email{meinolf.geck@mathematik.uni-stuttgart.de}
\email{alexander@lang-stb.de}
\thanks{This work is a contribution to the SFB-TRR 195 ``Symbolic Tools in
Mathematics and their Application'' of the German Research Foundation 
(DFG)}

\subjclass[2000]{Primary 20G40; Secondary 17B45}

\date{}

\begin{abstract}
Let $\fg$ be a finite-dimensional simple Lie algebra over $\C$. In the 
1950s Chevalley showed that $\fg$ admits particular bases, now called 
``Chevalley bases'', for which the corresponding structure constants are 
integers. Such bases are not unique but, using Lusztig's theory of canonical
bases, one can single out a ``canonical'' Chevalley basis which is unique 
up to a global sign. In this paper, we give explicit formulae for
the structure constants with respect to such a basis.
\end{abstract}

\keywords{Root systems, Lie algebras, structure constants}
\maketitle

\section{Introduction} \label{sec0}

Let $\fg$ be a finite-dimensional simple Lie algebra over $\C$. We shall
assume that the reader is familiar with the basic aspects of the 
Cartan--Killing structure theory for $\fg$; see, e.g., Bourbaki 
\cite{bour2} or Humphreys \cite{H}. Let $\fh\subseteq \fg$ be a Cartan
subalgebra and $\fh^*$ be the dual space. For any $\lambda \in \fh^*$
let $\fg_\lambda$ be the subspace of all $x\in \fg$ such that $[h,x]=
\lambda(h)x$ for $h\in \fh$. Then the root system of $\fg$ is the set of 
all non-zero $\lambda\in \fh^*$ such that $\fg_\lambda \neq \{0\}$. We have 
the fundamental Cartan decompositon 
\[ \fg=\fh\oplus \bigoplus_{\alpha \in \Phi} \fg_\alpha \quad\mbox{where}
\quad \mbox{$\fh=\fg_0\;\;$ and $\;\;\dim \fg_\alpha=1\;$ 
($\alpha \in \Phi$)}.\]
Let us choose elements $0\neq e_\alpha \in \fg_\alpha$ for all $\alpha \in 
\Phi$. Then $\{e_\alpha\mid \alpha \in \Phi\}$, together with a basis of 
$\fh$, yields a basis of $\fg$. In order to completely describe the 
multiplication table for $\fg$, the main problem are the Lie brackets
$[e_\alpha,e_\beta]$ where $\alpha,\beta\in \Phi$. If $\beta=-\alpha$, then
$[e_\alpha, e_{-\alpha}]=c_\alpha h_\alpha$, where $0\neq c_\alpha \in \C$ 
and $h_\alpha\in \fh$ is the co-root corresponding to~$\alpha$. Now there 
is a partition of $\Phi$ into positive and negative roots. By fixing 
elements $e_\alpha$ for all positive roots $\alpha \in \Phi$, one can 
always choose $e_{-\alpha}$ such that $c_\alpha$ takes any desired value. 
(Usually one takes $c_\alpha=1$ but we will see that other choices are
equally valuable.) Furthermore, it is known that $[e_\alpha,e_\beta]=0$ 
if $\alpha+\beta\not \in \Phi$ (e.g., if $\alpha=\beta$); otherwise, we have 
\[ [e_\alpha,e_\beta]=N_{\alpha,\beta}e_{\alpha+\beta} \qquad \mbox{where}
\qquad 0\neq N_{\alpha,\beta} \in \C.\] 
The constants $N_{\alpha,\beta}$ are the main object of interest in this 
paper. So let now $\alpha,\beta \in \Phi$ be such that $\beta\neq\pm\alpha$.
Then we set 
\[p_{\alpha,\beta}:=\max\{i \geq 0 \mid \beta+i\alpha \in \Phi\}\quad
\mbox{and}\quad q_{\alpha,\beta}:=\max\{i \geq 0 \mid \beta-i\alpha \in 
\Phi\}.\]
Thus, the sequence $\beta-q_{\alpha,\beta}\alpha,\ldots,\beta-\alpha,\beta,
\beta+\alpha,\ldots, \beta+p_{\alpha,\beta}\alpha$ is the ``$\alpha$-string
through~$\beta$''. All elements in that sequence are roots in $\Phi$. 
Chevalley \cite{chev} showed that one can always choose the elements 
$e_\alpha$ such that 
\[ N_{\alpha,\beta}=\pm (q_{\alpha,\beta}+1) \in \Z\qquad \mbox{if $\alpha,
\beta,\alpha+\beta \in \Phi$}.\]
In this case, we say that $\{e_\alpha\mid \alpha\in \Phi\}$ (together with
a suitable basis of $\fh$ determined by that selection) is a ``Chevalley 
basis'' of $\fg$. Note that such bases are not unique. One can replace each 
$e_\alpha$ individually by $\pm e_\alpha$, and one still obtains a Chevalley
basis. Now there is Lusztig's theory \cite{L1} of canonical bases for 
quantised enveloping algebras. This gives rise to canonical bases in all 
finite-dimensional irreducible $\fg$-modules. In particular, there is a 
canonical basis of $\fg$, viewed as a $\fg$-module via the adjoint 
representation; see \cite{L5} and further references there to previous work 
of Lusztig on this subject (prior to \cite{L1}). In \cite{mylie}, this is 
presented in a completely elementary way, without reference to the general 
theory in \cite{L1} or \cite{L5}. The result is that, by an inductice 
procedure entirely within $\fg$ itself, one can single out a ``canonical'' 
collection of the elements $\{e_\alpha\mid \alpha \in \Phi\}$, which is 
unique up to replacing all $e_\alpha$ simultaneously by $-e_\alpha$. 
Hence, the resulting structure constants $N_{\alpha,\beta}$ are also 
uniquely determined up to a global sign. The purpuse of this paper is to 
find explicit formulae for them.

In Section~\ref{sec1}, we recall some further facts from the structure
theory of simple Lie algebras and define the ``canonical'' collection
of the elements $e_\alpha$ mentioned above. In Section~\ref{sec2} we
give an explicit formula for the correspondong ``canonical'' structure 
constants $N_{\alpha,\beta}$, in the case where $\fg$ is simply laced, 
that is, of type $A_n$ ($n\geq 1$), $D_n$ ($n\geq 3$) or $E_n$ ($n=6,7,8$). 
The proof of the main result, Theorem~\ref{thm1}, is by a general argument, 
not ``case--by--case''. In Section~\ref{sec3}, we use the technique of 
``folding'' to obtain analogous results for $\fg$ of type $B_n$, $C_n$ 
($n\geq 2$), $G_2$ and~$F_4$. 

That technique is well-known in Lie theory; see, e.g, De Graaf 
\cite[\S 5.15]{graaf} or Kac \cite[\S 7.9]{kac}. See also Mitzman 
\cite{mitz} where this is used to construct integral forms of type
$2$ and $3$ affine Lie algebras and their universal enveloping 
algebras. Lusztig \cite{L1} uses ``admissible automorphisms'' ($=$ 
``folding'' in the finite case) to reduce the study of canonical bases 
for quantised enveloping algebras in the non-symmetric case to the 
symmetric case. In Section~\ref{sec3} we will present the ``folding'' 
procedure for $\fg$ in a way where the canonical basis of $\fg$ is 
built in from the outset; this may be of independent interest. In 
particular, ``folding'' will be seen to be perfectly compatible with 
canonical bases. 

\section{The $\epsilon$-canonical Chevally basis of $\fg$} \label{sec1}

Let $\fg$ be a simple Lie algebra, $\fh\subseteq \fg$ be a Cartan 
subalgebra and $\Phi\subseteq \fh^*$ be the root system, as in 
Section~\ref{sec0}. Let $I$ be a finite index set and $\Pi=\{\alpha_i
\mid i \in I\}$ be a set of simple roots of $\Phi$. Every $\alpha\in 
\Phi$ can be written uniquely as $\alpha=\sum_{i \in I} n_i \alpha_i$ 
where $n_i\in \Z$ and either $n_i\geq 0$ for all $i \in I$ or $n_i\leq 0$ 
for all $i \in I$. Correspondingly, we say that $\alpha$ is a positive 
or a negative root, respectively. Thus, $\alpha$ can be represented by 
an $I$-tuple $(n_i)_{i \in I}$ where all $n_i$ have the same sign. The 
integer $\hgt(\alpha):= \sum_{i \in I} n_i$ is called the height of~$\alpha$.

\begin{abs} \label{coroot} For every $\alpha \in \Phi$ let $h_\alpha\in
\fh$ be the corresponding co-root. This is characterised as follows. It 
is known that $[\fg_\alpha,\fg_{-\alpha}]$ is a $1$-dimensional subspace 
of $\fh$, and that $\alpha$ is non-zero on $[\fg_\alpha,\fg_{-\alpha}]$. 
Then $h_\alpha$ is the unique element in that subspace on which~$\alpha$ 
takes the value~$2$. (See, e.g., Bourbaki \cite[Ch.~VIII, \S 2, 
no.~2]{bour2}.) If we now take any $0\neq e_\alpha\in \fg_\alpha$, then 
there is an element $e_{-\alpha} \in \fg_{-\alpha}$ such that $[e_\alpha,
e_{-\alpha}]=\pm h_\alpha$. Thus, the subspace $\langle e_\alpha,
e_{-\alpha}, h_\alpha \rangle_\C\subseteq \fg$ is a subalgebra isomorphic 
to $\slm_2$. (For example, in \cite{bour2}, it is assumed that $[e_\alpha,
e_{-\alpha}]= -h_\alpha$ for all $\alpha$; in \cite{H}, it is assumend that 
$[e_\alpha,e_{-\alpha}]= +h_\alpha$. For us it will be convenient to keep 
the flexibility of choosing a sign for each $\alpha\in \Phi$.) 
\end{abs}

\begin{abs} \label{coroot1} The root system $\Phi$ has the following 
invariance property (which gives rise to the Weyl group of $\fg$ but we 
will not need to formally introduce this). Then  
\[\langle \alpha,\beta\rangle:=\beta(h_\alpha)\in \Z \qquad\mbox{and} 
\qquad \beta- \langle \alpha,\beta\rangle \alpha \in \Phi\qquad \mbox{for 
all $\alpha,\beta\in \Phi$}.\]
This has the following consequence. Assume that $m:=\beta(h_\alpha)\neq 0$. 
Then the $\alpha$-string through $\beta$ contains the term $\beta-m\alpha$. 
Hence, if $m<0$, then we will also have $\beta+\alpha\in \Phi$; if $m>0$, 
then we will also have $\beta-\alpha\in \Phi$. (For all this see, e.g.,
Bourbaki \cite[Ch.~VI, \S 1, no.~3]{bour1} or \cite[Ch.~VIII, \S 2, 
no.~2]{bour2}.)
\end{abs} 

\begin{abs} \label{eps1}
We set $h_i:=h_{\alpha_i}$ for $i\in I$. We have just seen that there 
are elements $e_i \in \fg_{\alpha_i}$ and $f_i \in \fg_{-\alpha_i}$ such 
that $h_i = [e_i,f_i]$ for all $i\in I$. It is known that $\fg$ is 
generated (as a Lie algebra) by the elements $e_i,f_i$ ($i\in I$). We 
call $\{e_i,f_i,h_i \mid i \in I\}$ a system of \textit{Chevalley 
generators} for~$\fg$. (By the Isomorphism Theorem in \cite[\S 14.2]{H}, any 
two such systems can be transformed into each other by an automorphisms 
of $\fg$.) The corresponding Cartan matrix is defined by $A=(a_{ij})_{i,j
\in I}$ where 
\[ a_{ij}:=\langle \alpha_i,\alpha_j\rangle =\alpha_j(h_i)\in \Z \quad 
\mbox{for all $i,j\in I$}.\]
Thus, we have $[h_i,e_j]=a_{ij}e_j$ and $[h_i,f_j]=-a_{ij}f_j$ for all
$i,j\in I$. We also have $[e_i,f_j]=0$ for $i\neq j$ in $I$.
\end{abs}

Now it is known that $A=(a_{ij})_{i,j\in I}$ is an 
indecomposable (generalised) Cartan matrix of finite type (see 
\cite[Chap.~4]{kac}). These matrices are encoded by the Dynkin diagrams 
in Table~\ref{Mdynkintbl}, as follows. The vertices of the diagram are 
labelled by the elements of~$I$. Let $i,j\in I$, $i\neq j$. If $a_{ij}=
a_{ji}=0$, then there is no edge between the vertices labelled by~$i$ 
and~$j$. Now assume that $a_{ij} \neq 0$. Then we also have $a_{ji}\neq 0$ 
and the notation can be chosen such that $a_{ij}=-1$ and $m:=-a_{ji}\in 
\{1,2,3\}$. In this case, the vertices labelled by $i$, $j$ will be joined 
by $m$ edges; if $m\geq 2$, then we put an additional arrow pointing 
towards~$j$.

\begin{table}[htbp] \caption{Dynkin diagrams of Cartan matrices of 
finite  type} \label{Mdynkintbl} 
\begin{center} 
\makeatletter
\begin{picture}(345,170)
\put( 13, 25){$E_7$}
\put( 50, 25){\circle*{5}}
\put( 48, 30){$1^+$}
\put( 50, 25){\line(1,0){20}}
\put( 70, 25){\circle*{5}}
\put( 68, 30){$3^-$}
\put( 70, 25){\line(1,0){20}}
\put( 90, 25){\circle*{5}}
\put( 88, 30){$4^+$}
\put( 90, 25){\line(0,-1){20}}
\put( 90, 05){\circle*{5}}
\put( 95, 03){$2^-$}
\put( 90, 25){\line(1,0){20}}
\put(110, 25){\circle*{5}}
\put(108, 30){$5^-$}
\put(110, 25){\line(1,0){20}}
\put(130, 25){\circle*{5}}
\put(128, 30){$6^+$}
\put(130, 25){\line(1,0){20}}
\put(150, 25){\circle*{5}}
\put(148, 30){$7^+$}

\put(190, 25){$E_8$}
\put(220, 25){\circle*{5}}
\put(218, 30){$1^+$}
\put(220, 25){\line(1,0){20}}
\put(240, 25){\circle*{5}}
\put(238, 30){$3^-$}
\put(240, 25){\line(1,0){20}}
\put(260, 25){\circle*{5}}
\put(258, 30){$4^+$}
\put(260, 25){\line(0,-1){20}}
\put(260, 05){\circle*{5}}
\put(265, 03){$2^-$}
\put(260, 25){\line(1,0){20}}
\put(280, 25){\circle*{5}}
\put(278, 30){$5^-$}
\put(280, 25){\line(1,0){20}}
\put(300, 25){\circle*{5}}
\put(298, 30){$6^+$}
\put(300, 25){\line(1,0){20}}
\put(320, 25){\circle*{5}}
\put(318, 30){$7^-$}
\put(320, 25){\line(1,0){20}}
\put(340, 25){\circle*{5}}
\put(338, 30){$8^+$}

\put( 13, 59){$G_2$}
\put( 50, 60){\circle*{6}}
\put( 48, 66){$1^-$}
\put( 50, 57){\line(1,0){20}}
\put( 50, 60){\line(1,0){20}}
\put( 50, 63){\line(1,0){20}}
\put( 56, 57.5){$>$}
\put( 70, 60){\circle*{6}}
\put( 68, 66){$2^+$}

\put(120, 60){$F_4$}
\put(145, 60){\circle*{5}}
\put(143, 65){$1^-$}
\put(145, 60){\line(1,0){20}}
\put(165, 60){\circle*{5}}
\put(163, 65){$2^+$}
\put(165, 58){\line(1,0){20}}
\put(165, 62){\line(1,0){20}}
\put(171, 57.5){$>$}
\put(185, 60){\circle*{5}}
\put(183, 65){$3^-$}
\put(185, 60){\line(1,0){20}}
\put(205, 60){\circle*{5}}
\put(203, 65){$4^+$}

\put(230, 80){$E_6$}
\put(260, 80){\circle*{5}}
\put(258, 85){$1^+$}
\put(260, 80){\line(1,0){20}}
\put(280, 80){\circle*{5}}
\put(278, 85){$3^-$}
\put(280, 80){\line(1,0){20}}
\put(300, 80){\circle*{5}}
\put(298, 85){$4^+$}
\put(300, 80){\line(0,-1){20}}
\put(300, 60){\circle*{5}}
\put(305, 58){$2^-$}
\put(300, 80){\line(1,0){20}}
\put(320, 80){\circle*{5}}
\put(318, 85){$5^-$}
\put(320, 80){\line(1,0){20}}
\put(340, 80){\circle*{5}}
\put(338, 85){$6^+$}

\put( 13,110){$D_n$}
\put( 13,100){$\scriptstyle{n \geq 3}$}
\put( 50,130){\circle*{5}}
\put( 55,130){$1^+$}
\put( 50,130){\line(1,-1){21}}
\put( 50, 90){\circle*{5}}
\put( 56, 85){$2^+$}
\put( 50, 90){\line(1,1){21}}
\put( 70,110){\circle*{5}}
\put( 68,115){$3^-$}
\put( 70,110){\line(1,0){30}}
\put( 90,110){\circle*{5}}
\put( 88,115){$4^+$}
\put(110,110){\circle*{1}}
\put(120,110){\circle*{1}}
\put(130,110){\circle*{1}}
\put(140,110){\line(1,0){10}}
\put(150,110){\circle*{5}}
\put(147,115){$n^\pm$}

\put(210,110){$C_n$}
\put(210,100){$\scriptstyle{n \geq 2}$}
\put(240,110){\circle*{5}}
\put(238,115){$1^{\pm}$}
\put(240,108){\line(1,0){20}}
\put(240,112){\line(1,0){20}}
\put(246,107.5){$>$}
\put(260,110){\circle*{5}}
\put(258,115){$2^{\mp}$}
\put(260,110){\line(1,0){30}}
\put(280,110){\circle*{5}}
\put(278,115){$3^{\pm}$}
\put(300,110){\circle*{1}}
\put(310,110){\circle*{1}}
\put(320,110){\circle*{1}}
\put(330,110){\line(1,0){10}}
\put(340,110){\circle*{5}}
\put(337,115){$n^{+}$}

\put( 10,150){$A_n$}
\put( 10,140){$\scriptstyle{n \geq 1}$}
\put( 50,150){\circle*{5}}
\put( 48,155){$1^+$}
\put( 50,150){\line(1,0){20}}
\put( 70,150){\circle*{5}}
\put( 68,155){$2^-$}
\put( 70,150){\line(1,0){30}}
\put( 90,150){\circle*{5}}
\put( 88,155){$3^+$}
\put(110,150){\circle*{1}}
\put(120,150){\circle*{1}}
\put(130,150){\circle*{1}}
\put(140,150){\line(1,0){10}}
\put(150,150){\circle*{5}}
\put(148,155){$n^{\pm}$}

\put(210,150){$B_n$}
\put(210,140){$\scriptstyle{n \geq 2}$}
\put(240,150){\circle*{5}}
\put(238,155){$1^+$}
\put(240,148){\line(1,0){20}}
\put(240,152){\line(1,0){20}}
\put(246,147.5){$<$}
\put(260,150){\circle*{5}}
\put(258,155){$2^-$}
\put(260,150){\line(1,0){30}}
\put(280,150){\circle*{5}}
\put(278,155){$3^+$}
\put(300,150){\circle*{1}}
\put(310,150){\circle*{1}}
\put(320,150){\circle*{1}}
\put(330,150){\line(1,0){10}}
\put(340,150){\circle*{5}}
\put(338,155){$n^\pm$}
\end{picture}
\end{center}
\end{table}

\begin{abs} \label{dynk1} In the diagrams in Table~\ref{Mdynkintbl} we 
also specify a function $\epsilon\colon I \rightarrow \{\pm 1\}$ such that 
$\epsilon(i)=-\epsilon(j)$ whenever $i\neq j$ and $a_{ij}\neq 0$. Note 
that, since the diagram is connected, there are exactly two such functions:
if $\epsilon$ is one of them, then the other one is~$-\epsilon$. (The
conventions for the definition of $\epsilon$ for types $B_n$, $C_n$, $G_2$,
$F_4$ will be explained in Section~\ref{sec3}.) This function is an 
essential ingredient in the definition of a ``canonical Chevalley basis''
of $\fg$ below.
\end{abs}

\begin{abs} \label{chevb} The following remarks will be useful in 
identifying a Chevalley basis for $\fg$. Let us choose, for each $\alpha
\in \Phi$, elements $e_\alpha\in \fg_\alpha$ and $e_{-\alpha}\in
\fg_{-\alpha}$ such that $[e_\alpha,e_{-\alpha}]=\pm h_\alpha$. Furthermore, 
let $\omega\colon \fg \rightarrow \fg$ be the unique automorphism such that
$\omega(e_i)=f_i$, $\omega(f_i)=e_i$ and $\omega(h_i)=-h_i$ for $i\in I$; 
note that $\omega^2=\mbox{id}_\fg$. (This exists by the Isomorphism 
Theorem \cite[\S 14.2]{H}, or see \cite[Remark~4.8]{mylie}.) Assume that
\[\omega(e_\alpha)=\pm e_{-\alpha} \qquad \mbox{for all $\alpha\in \Phi$}.\]
Now let $\alpha,\beta\in \Phi$ be such that $\alpha+\beta\in \Phi$. Then, by 
Chevalley \cite[p.~23]{chev}, we have $N_{\alpha,\beta}N_{-\alpha,-\beta}=
\pm (q_{\alpha,\beta}+1)^2$. Using the automorphism $\omega$, it follows that 
$N_{-\alpha,-\beta}=\pm N_{\alpha,\beta}$ and, hence,
\begin{equation*}
N_{\alpha,\beta}^2=\pm (q_{\alpha,\beta}+1)^2\qquad \mbox{whenever}
\qquad \alpha,\beta,\alpha+\beta\in \Phi.\tag{$*$}
\end{equation*}
So if we know for some reason that $N_{\alpha,\beta}\in \R$, then
$N_{\alpha,\beta}=\pm (q_{\alpha,\beta}+1)$. Assume now that this is
the case; then $B:=\{h_i \mid i \in I\}\cup \{e_\alpha\mid \alpha\in\Phi\}$ 
is a \textit{Chevalley basis} of~$\fg$. Since each $h_\alpha$ is known 
to be a $\Z$-linear combination of $h_i$ ($i\in I$), it follows that the 
complete multiplication table of $\fg$ with respect to $B$ has only
entries in~$\Z$. Thus, a Chevalley basis defines an ``integral structure'' 
of~$\fg$.
\end{abs}

\begin{abs} \label{cb4} We can now describe Lusztig's \textit{canonical 
basis} of $\fg$, in the elementary version of \cite{mylie}. Having fixed 
$\epsilon\colon I\rightarrow \{\pm 1\}$ (see \ref{dynk1}), there is a 
unique collection of elements $\{0\neq \be_\alpha^\epsilon \in 
\fg_\alpha \mid \alpha\in \Phi\}$ such that the following relations 
hold, for any $i\in I$:
\begin{alignat*}{2} 
\be_{\alpha_i}^\epsilon &= \epsilon(i)e_i, &\quad \be_{-\alpha_i}^\epsilon
&= -\epsilon(i)f_i,\\
[e_i,\be_\alpha^\epsilon]&=(q_{\alpha_i,\alpha}+1)\be_{\alpha+
\alpha_i}^\epsilon &\qquad &\mbox{if $\alpha+\alpha_i\in \Phi$},\\
[f_i,\be_\alpha^\epsilon]&=(p_{\alpha_i,\alpha}+1)\be_{\alpha-
\alpha_i}^\epsilon &\qquad &\mbox{if $\alpha-\alpha_i\in \Phi$}.
\end{alignat*}
See \cite[\S 5]{mylie}. If we replace $\epsilon$ by $-\epsilon$, then 
$\be_\alpha^{-\epsilon}=-\be_\alpha^\epsilon$ for all $\alpha\in\Phi$. So
the passage from $\epsilon$ to $-\epsilon$ is given by a very simple rule.
In this setting, it automatically follows that 
\[\omega(\be_\alpha^\epsilon)=-\be_{-\alpha}^\epsilon \quad\mbox{and}
\quad [\be_\alpha^\epsilon,\be_{-\alpha}^\epsilon]=(-1)^{\hgt(\alpha)}
h_\alpha \quad \mbox{for all $\alpha\in \Phi$};\] 
see \cite[Theorem~5.7]{mylie}. We call
\[ \cB^\epsilon=\{h_i\mid i \in I\}\cup \{\be_\alpha^\epsilon\mid \alpha
\in\Phi\}\] 
the \textit{$\epsilon$-canonical Chevalley basis} of $\fg$. (The above 
rules provide an efficient algorithm for constructing $\cB^\epsilon$; see 
\cite{chevlie}.) The structure constants with respect to the collection 
$\{\be_\alpha^\epsilon\mid \alpha \in \Phi\}$ will be denoted by 
$N_{\alpha,\beta}^\epsilon$; thus we have 
\[ [\be_\alpha^\epsilon,\be_\beta^\epsilon] =N_{\alpha,\beta}^\epsilon
\be_{\alpha+\beta}^\epsilon \qquad \mbox{if $\alpha,\beta,\alpha+\beta
\in \Phi$}.\]
It will be convenient to set $N_{\alpha,\beta}^\epsilon:=0$ for any 
$\alpha,\beta\in \Phi$ such that $\alpha\neq \pm \beta$ and $\alpha+
\beta\not\in\Phi$. It is already known (see once more 
\cite[Theorem~5.7]{mylie}) that 
\[N_{\alpha,\beta}^\epsilon=\eta^\epsilon(\alpha,\beta)(q_{\alpha,\beta}
+1) \qquad \mbox{if $\alpha,\beta,\alpha+\beta\in \Phi$},\]
where $\eta^\epsilon(\alpha,\beta)=\pm 1$. So $\cB^\epsilon$ indeed is
a Chevalley basis. Our aim is to obtain explicit formulae for the signs 
$\eta^\epsilon(\alpha,\beta)$, just in terms of the roots $\alpha, 
\beta\in \Phi$ and~$\epsilon$.
\end{abs}

\begin{exmp}[Type $A_{n-1}$] \label{typeA} Let $n\geq 2$ and $\fg=\slm_n$ 
be the Lie algebra of $n\times n$-matrices with trace zero. Let $\fh
\subseteq \fg$ be the subalgebra consisting of all diagonal matrices 
with trace zero. It is known that $\fg$ is simple and $\fh$ is a Cartan
subalgebra of~$\fg$. For $1\leq i\leq n$, let $\delta_i\in \fh^*$ be the 
map which sends a diagonal matrix to its $i$th diagonal entry; note that 
$\delta_1+ \ldots+\delta_n=0$. For $i\neq j$ let $e_{ij} \in \fg$ be the 
matrix with entry~$1$ at position $(i,j)$, and $0$ everywhere else. Then 
a simple matrix calculation shows that
\[[h,e_{ij}]=(\delta_i(x)-\delta_j(x))e_{ij}
\qquad \mbox{for all $h\in \fh$},\]
and so $e_{ij}\in \fg_{\delta_i-\delta_j}$. It easily follows that
$\Phi=\{\delta_i-\delta_j \mid 1\leq i,j\leq n,i\neq j\}$.
Now set $\alpha_i:=\delta_{i}-\delta_{i+1}$ for $1\leq i
\leq n-1$. Then $\{\alpha_1,\ldots,\alpha_{n-1}\}$ is a system of simple 
roots for $\Phi$. For all $i\neq j$ we have
\[ \delta_i-\delta_j=\left\{\begin{array}{cc} 
\alpha_{i}+ \alpha_{i+1}+ \ldots +\alpha_{j-1} & \quad 
\mbox{if $i<j$},\\ -(\alpha_{j}+\alpha_{j+1}+\ldots + \alpha_{i-1}) & 
\quad \mbox{if $i>j$}. \end{array}\right.\]
Now let $I:=\{1,\ldots,n-1\}$; for $i\in I$ we set 
\[ h_i:=e_{ii}-e_{i+1,i+1}\in \fh, \qquad e_i:=e_{i,i+1}\in \fg_{\alpha_i},
\qquad f_i=e_{i+1,i}\in \fg_{-\alpha_i}.\]
Then $\{e_i,f_i,h_i\mid i \in I\}$ is a system of Chevalley generators for
$\fg$. Since the Lie brackets $[e_i,e_{kl}]$ are easily determined
for all $i$ and all $k\neq l$, one can directly verify in this case that
\begin{center}
\fbox{$\be_{\alpha}^\epsilon=-(-1)^{\hgt(\alpha)}\epsilon(i) e_{ij} 
\qquad \mbox{if $\alpha=\delta_i-\delta_j$, $i\neq j$}.$}
\end{center}
(For this purpose, one has to check that the relations in \ref{cb4} hold;
note that the absolute value of $\hgt(\alpha)$ equals $|i-j|$ if $\alpha=
\delta_i-\delta_j$.) Let $\alpha=\delta_i-\delta_j$ and $\beta=\delta_j-
\delta_k$ where $i,j,k$ are pairwise different. Then $\alpha+\beta=
\delta_i-\delta_k$ and 
\begin{center}
\fbox{$[\be_\alpha^\epsilon,\be_\beta^\epsilon]=-\epsilon(j)
\be_{\alpha+\beta}^\epsilon,\quad$ that is, $\quad N_{\alpha,
\beta}^\epsilon(\alpha,\beta)=-\epsilon(j)$.}
\end{center}
\end{exmp}

Similar explicit descriptions have been determined by the second named
author for all the classical Lie algebras $\fg$ of type $B_n$, $C_n$, 
$D_n$ in their natural matrix representations; see \cite[Chap.~2]{Lang}.

\section{The simply laced case} \label{sec2}

We keep the notation of the previous section. Our aim will be to describe
the structure constants $N_{\alpha,\beta}^\epsilon$ with respect to the 
$\epsilon$-canonical Chevalley basis 
\[ \cB^\epsilon= \{h_i\mid i \in I\} \cup \{\be_\alpha^\epsilon\mid
\alpha\in \Phi\}\] 
of $\fg$ (see \ref{cb4}). In this section, we shall deal with the case 
where $A$ is \textit{simply laced}, that is, $a_{ij}\in \{0,\pm 1\}$ for 
all $i\neq j$ in $I$. 

\begin{rem} \label{sec20} Let $\alpha,\beta\in \Phi$ be such that 
$\alpha+\beta\in \Phi$. By the defining properties of the Lie bracket, 
we certainly habe $N_{\beta,\alpha}^\epsilon=-N_{\alpha,\beta}^\epsilon$.
Now let $\omega\colon \fg\rightarrow \fg$ be the automorphism in 
\ref{chevb}. As already mentioned in \ref{cb4}, we have 
$\omega(\be_\alpha^\epsilon)=-\be_{-\alpha}^\epsilon$ for all $\alpha
\in \Phi$. This implies that $N_{-\alpha,-\beta}^\epsilon=-N_{\alpha,
\beta}^\epsilon$.  Hence, in order to determine the structure constants 
$N_{\alpha,\beta}^\epsilon$, it is sufficient to consider the case where 
$\alpha$ is a positive root (but~$\beta$ may still be an arbitrary root).
\end{rem}

\begin{abs} \label{ADE0} Assume that $A$ is simply laced. This is equivalent 
to saying that $A$ is symmetric, and also to saying that $A$ is of type $A_n$
($n\geq 1$), $D_n$ ($n\geq 3$) or $E_n$ ($n=6,7,8$). Assume now that this
is the case. Then the following hold, where $\alpha,\beta\in \Phi$ are
such that $\alpha\neq \pm \beta$.
\begin{itemize}
\item[(a)] $\;0\leq p_{\alpha, \beta}+q_{\alpha,\beta}\leq 1$ and
$\langle \alpha,\beta\rangle\in \{0,\pm 1\}$. 
\item[(b)] $\;\langle \alpha,\beta\rangle=1 \Leftrightarrow 
\alpha-\beta \in \Phi$, and $\langle \alpha,\beta\rangle=-1 
\Leftrightarrow \alpha+\beta \in \Phi$.
\item[(c)] $\;\langle \alpha,\beta\rangle=\langle \beta,\alpha\rangle$.
(This immediately follows from (a) and (b).)
\item[(d)] $\,$ If $\alpha+\beta\in \Phi$, then $q_{\alpha,\beta}=0$ 
and so $N_{\alpha,\beta}^\epsilon=\pm 1$.
\end{itemize} 
(See, e.g., Bourbaki \cite[Ch.~VI, \S 1, no.~3]{bour1} or the 
discussion in Carter \cite[\S 3.4]{C1}.)
\end{abs}

\begin{exmp} \label{heta0} Assume that $A$ is simply laced. Let $\alpha,
\beta\in \Phi$. Using the properties in \ref{ADE0}, we obtain the following 
formula, which will be useful below. Let $\alpha,\beta\in \Phi$. Write
$\alpha=\sum_{i\in I} n_i\alpha_i$ and $\beta=\sum_{j \in I} m_j \alpha_j$ 
where $n_i,m_j\in I$. Then 
\begin{align*}
\langle \alpha,\beta\rangle&=\beta(h_\alpha)=
\sum_{j \in I} m_j \alpha_j(h_\alpha)=
\sum_{j \in I} m_j \langle\alpha,\alpha_j\rangle=
\sum_{j \in I} m_j \langle\alpha_j,\alpha\rangle\\ &=
\sum_{j \in I} m_j \alpha(h_j)=\sum_{i,j \in I} n_im_j \alpha_i(h_j)=
\sum_{i,j \in I} n_im_j a_{ji}=\sum_{i,j \in I} a_{ij}n_im_j.
\end{align*}
\end{exmp}

\begin{lem} \label{lem1} Assume that $A$ is simply laced. Let $l\in I$ 
and $\alpha,\beta\in \Phi$ be such that $\alpha_l+\alpha\in \Phi$ and 
$\alpha_l+\alpha+\beta\in \Phi$. Also assume that $\alpha\neq \pm \beta$ and
$\beta \neq \pm \alpha_l$. Then we have either $\alpha+\beta\in \Phi$ or 
$\alpha_l+\beta\in \Phi$ (but not both). Accordingly, we have:
\[ N_{\alpha_l+\alpha,\beta}^\epsilon=\left\{\begin{array}{cl}
N_{\alpha,\beta}^\epsilon & \quad \mbox{if $\alpha+\beta \in \Phi$},\\
-N_{\alpha,\alpha_l+\beta}^\epsilon & \quad \mbox{if $\alpha_l+\beta\in 
\Phi$}.\end{array}\right.\]
\end{lem}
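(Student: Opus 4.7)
The natural approach is to apply the Jacobi identity to the triple $(e_l,\be^\epsilon_\alpha,\be^\epsilon_\beta)$, from which both the dichotomy and the sign formulas fall out.

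First, I would establish that exactly one of $\alpha+\beta$ and $\alpha_l+\beta$ is a root. In the simply laced case $\langle\cdot,\cdot\rangle$ is symmetric and bilinear, so by \ref{ADE0}(b) the hypothesis $\alpha_l+\alpha+\beta\in\Phi$ translates into $\langle\alpha+\alpha_l,\beta\rangle=-1$, i.e.,
\[\langle\alpha,\beta\rangle+\langle\alpha_l,\beta\rangle=-1.\]
Each summand lies in $\{-1,0,1\}$ by \ref{ADE0}(a) (using $\alpha\neq\pm\beta$ and $\alpha_l\neq\pm\beta$), forcing the pair to be $(-1,0)$ or $(0,-1)$; reading this off via \ref{ADE0}(b) gives the claimed alternative. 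Along the way I would record the non-degeneracy facts $\alpha_l+\alpha\neq\pm\beta$ and, in the second case, $\alpha\neq\pm(\alpha_l+\beta)$, each of which reduces to the impossibility of $0\in\Phi$ or $2\gamma\in\Phi$; these ensure that all $N^\epsilon$-symbols appearing below refer to genuine structure constants and not to Cartan-valued brackets.

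The main computation is the Jacobi identity
\[[e_l,[\be^\epsilon_\alpha,\be^\epsilon_\beta]]
 =[[e_l,\be^\epsilon_\alpha],\be^\epsilon_\beta]
 +[\be^\epsilon_\alpha,[e_l,\be^\epsilon_\beta]].\]
The simply laced hypothesis forces all numerical factors $(q_{\alpha_l,\gamma}+1)$ arising from \ref{cb4} to equal $1$: whenever $\alpha_l+\gamma\in\Phi$ we have $p_{\alpha_l,\gamma}\geq 1$, hence $q_{\alpha_l,\gamma}=0$ by \ref{ADE0}(a), so that $[e_l,\be^\epsilon_\gamma]=\be^\epsilon_{\alpha_l+\gamma}$. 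Since $\alpha_l+\alpha+\beta\in\Phi$, every nonzero term on either side of the identity becomes a scalar multiple of $\be^\epsilon_{\alpha_l+\alpha+\beta}$.

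Comparing coefficients then finishes the proof. When $\alpha+\beta\in\Phi$ (so $\alpha_l+\beta\notin\Phi$, and also $\alpha_l+\beta\neq 0$ by hypothesis), the last term on the right vanishes and the identity reads $N^\epsilon_{\alpha,\beta}\be^\epsilon_{\alpha_l+\alpha+\beta}=N^\epsilon_{\alpha_l+\alpha,\beta}\be^\epsilon_{\alpha_l+\alpha+\beta}$. When instead $\alpha_l+\beta\in\Phi$, the left-hand side vanishes (since $\alpha+\beta\notin\Phi$ and $\alpha+\beta\neq 0$) and the identity reads $0=N^\epsilon_{\alpha_l+\alpha,\beta}+N^\epsilon_{\alpha,\alpha_l+\beta}$. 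The main thing requiring care is the bookkeeping of the non-degeneracy conditions recorded in paragraph two, so that no $[e_l,\be^\epsilon_{-\alpha_l}]$ slips into the computation and every $N^\epsilon$ is well defined; beyond this the argument is a direct substitution.
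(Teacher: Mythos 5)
Your proposal is correct and takes essentially the same route as the paper: the paper likewise applies the Jacobi identity to the triple $\be_{\alpha_l}^\epsilon=\epsilon(l)e_l$, $\be_\alpha^\epsilon$, $\be_\beta^\epsilon$ and compares coefficients of $\be_{\alpha_l+\alpha+\beta}^\epsilon$, using that in the simply laced case all factors $q_{\alpha_l,\gamma}+1$ equal $1$. The only minor divergence is that you prove the dichotomy (exactly one of $\alpha+\beta$, $\alpha_l+\beta$ lies in $\Phi$) beforehand from $\langle\alpha,\beta\rangle+\langle\alpha_l,\beta\rangle=-1$ and \ref{ADE0}, whereas the paper reads it off from the resulting structure-constant identity, whose left-hand side is $\pm 1$ while each product on the right is $0$ or $\pm 1$; both arguments are complete and your bookkeeping of the degenerate cases is exactly what is needed.
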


\begin{proof} We have $[\be_{\alpha_l}^\epsilon,\be_\alpha^\epsilon]
=N_{\alpha_l,\alpha}^\epsilon \be_{\alpha_l+\alpha}^\epsilon$,
$[\be_{\alpha_l+\alpha}^\epsilon,\be_\beta^\epsilon]
=N_{\alpha_l+\alpha,\beta}^\epsilon \be_{\alpha_l+\alpha+\beta}^\epsilon$.
So, using the Jacobi identity and the anti-symmetry of the Lie bracket,
we obtain
\begin{align*}
N_{\alpha_l,\alpha}^\epsilon N_{\alpha_l+\alpha,\beta}^\epsilon
\be_{\alpha_l+\alpha+\beta}^\epsilon &=[[\be_{\alpha_l}^\epsilon,
\be_\alpha^\epsilon],\be_\beta^\epsilon]=
-[[\be_{\alpha}^\epsilon, \be_\beta^\epsilon]],\be_{\alpha_l}^\epsilon]
-[[\be_{\beta}^\epsilon, \be_{\alpha_l}^\epsilon]],\be_{\alpha}^\epsilon]\\
&=[\be_{\alpha_l}^\epsilon,[\be_{\alpha}^\epsilon, \be_\beta^\epsilon]]
-[\be_\alpha^\epsilon,[\be_{\alpha_l}^\epsilon, \be_{\beta}^\epsilon]].
\end{align*}
Now, if $\alpha+\beta\in \Phi$, then $[\be_\alpha^\epsilon,
\be_\beta^\varepsilon]=N_{\alpha,\beta}^\epsilon\be_{\alpha+\beta}^\epsilon$;
otherwise, this is~$0$ (since $\alpha\neq \pm \beta$). Similarly, if 
$\alpha_l+\beta\in \Phi$, then $[\be_{\alpha_l},\be_\beta^\epsilon]=
N_{\alpha_l,\beta}^\epsilon \be_{\alpha +\beta}^\epsilon$; otherwise, this 
is~$0$ (since $\beta\neq \pm \alpha_l$). Hence, since $N_{\alpha_l,
\alpha}^\epsilon=\epsilon(l)$ (see \ref{cb4} and note that $q_{\alpha_l,
\alpha}=0$ since $A$ is simply laced), we obtain the following formula:
\[ \epsilon(l)N_{\alpha_l+\alpha,\beta}^\epsilon=
N_{\alpha,\beta}^\epsilon N_{\alpha_l,\alpha+\beta}^\epsilon - 
N_{\alpha_l,\beta}^\epsilon N_{\alpha,\alpha_l+\beta}^\epsilon.\]
Since $A$ is simply laced, all non-zero structure constants are $\pm 1$.
Since the left hand side of the above identity is non-zero, we conclude
that either $N_{\alpha,\beta}^\epsilon N_{\alpha_l,\alpha+\beta}^\epsilon=
\pm 1$ or $N_{\alpha_l,\beta}^\epsilon N_{\alpha,\alpha_l+\beta}^\epsilon=
\pm 1$ (but not both). Hence, either $\alpha+\beta\in \Phi$ or $\alpha_l+
\beta\in \Phi$ (but not both). Accordingly, $\epsilon(l)N_{\alpha_l+
\alpha,\beta}^\epsilon =N_{\alpha,\beta}^\epsilon N_{\alpha_l,\alpha+
\beta}^\epsilon\neq 0$ or $\epsilon(l)N_{\alpha_l+\alpha,\beta}^\epsilon=
- N_{\alpha_l,\beta}^\epsilon N_{\alpha,\alpha_l+\beta}^\epsilon\neq 0$. 
In the first case, $N_{\alpha_l,\alpha+\beta}^\epsilon=\epsilon(l)$; in 
the second case, $N_{\alpha_l,\beta}^\epsilon=\epsilon(l)$. This
yields the desired formulae.
\end{proof}  

\begin{defn}[Lang \protect{\cite[\S 3.2.2]{Lang}}] \label{def1}
For $\alpha\in \Phi$ we set $\sgn(\alpha):=1$ if $\alpha$ is positive,
and $\sgn(\alpha):=-1$ if $\alpha$ is negative. Let $\alpha,\beta\in \Phi$
be such that $\alpha+\beta\in \Phi$. Writing $\alpha=\sum_{i \in I} 
n_i\alpha_i$ and $\beta=\sum_{j \in I} m_j\alpha_j$ with $n_i,m_j\in \Z$,
we define
\begin{equation*}
\heta(\alpha,\beta):= \sgn(\alpha)\sgn(\beta)\sgn(\alpha+\beta) 
\prod_{i,j\in I} \epsilon(i)^{a_{ij}n_im_j}=\pm 1.\tag{$\clubsuit$}
\end{equation*}
Using the identity $\langle \alpha_i,\beta\rangle=\beta(h_i)=
\sum_{j \in I} \alpha_j(h_i)m_j=\sum_{j \in I} a_{ij}m_j$, we can 
re-write the above formula as
\[ \heta(\alpha,\beta)= \sgn(\alpha)\sgn(\beta)\sgn(\alpha+\beta)
\prod_{i \in I} \epsilon(i)^{n_i\langle \alpha_i,\beta\rangle},\]
which will be useful in some arguments below.
\end{defn}

\begin{exmp} \label{def1a} Assume that $A$ is simply laced. Let $i \in I$ 
and $\beta\in \Phi$ be such that $\alpha_i+\beta\in \Phi$. We claim that 
$\heta(\alpha_i,\beta)= \epsilon(i)$. Indeed, we have $n_i=1$ and $n_j=0$
for $i\neq j$. It is known that $\beta$ and $\alpha_i+\beta$ have the same 
sign and so $\sgn(\beta)\sgn(\alpha_i+\beta)=1$. Hence, since $\sgn
(\alpha_i)=1$, we obtain $\heta(\alpha_i,\beta)=\epsilon(i)^{\langle 
\alpha_i,\beta \rangle}$. Since $\alpha_i+\beta\in \Phi$ we have $\langle 
\alpha_i,\beta \rangle=-1$ by \ref{ADE0}(b). So $\heta(\alpha_i,\beta)=
\epsilon(i)$. 
\end{exmp}

In the following two results we show that $\heta(\alpha,\beta)$ satisfies 
relations analogous to those in Remark~\ref{sec20} and Lemma~\ref{lem1}.

\begin{lem} \label{heta1} Assume that $A$ is simply laced.
If $\alpha,\beta,\alpha+\beta\in \Phi$, then
$\heta(\beta,\alpha)=\heta(-\alpha,-\beta)=-\heta(\alpha,\beta)$.
\end{lem}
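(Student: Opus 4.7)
The plan is to verify both identities by direct manipulation of the explicit formula ($\clubsuit$) in Definition~\ref{def1}, using only that $A$ is symmetric (simply laced) together with Example~\ref{heta0} and property \ref{ADE0}(b).

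First I would dispose of $\heta(-\alpha,-\beta)=-\heta(\alpha,\beta)$. Replacing $\alpha,\beta$ by $-\alpha,-\beta$ changes the coefficient tuples $(n_i),(m_j)$ to $(-n_i),(-m_j)$. In the product $\prod_{i,j}\epsilon(i)^{a_{ij}n_im_j}$ the exponents pick up $(-1)(-1)=1$, so the product is unchanged. However, each of $\sgn(\alpha)$, $\sgn(\beta)$, $\sgn(\alpha+\beta)$ flips sign, producing an overall factor $(-1)^3=-1$. This gives the second equality immediately, and requires no hypothesis beyond the definition.

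Next I would handle $\heta(\beta,\alpha)=-\heta(\alpha,\beta)$. The $\sgn$-prefactor is manifestly symmetric in $\alpha,\beta$, so the task is to compare $\prod_{i,j}\epsilon(i)^{a_{ij}m_in_j}$ with $\prod_{i,j}\epsilon(i)^{a_{ij}n_im_j}$. Reindexing $(i,j)\mapsto(j,i)$ in the first product and using the symmetry $a_{ji}=a_{ij}$ (which holds precisely because $A$ is simply laced) rewrites it as $\prod_{i,j}\epsilon(j)^{a_{ij}n_im_j}$. The ratio is therefore
\[
\frac{\heta(\beta,\alpha)}{\heta(\alpha,\beta)}=\prod_{i,j\in I}\bigl(\epsilon(i)\epsilon(j)\bigr)^{a_{ij}n_im_j}.
\]

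The key step is to evaluate this ratio modulo~$2$ using the defining property of $\epsilon$. For $i=j$ the factor is $1$; for $i\neq j$ with $a_{ij}=0$ the factor is $1$; for $i\neq j$ with $a_{ij}\neq 0$ we have $\epsilon(i)\epsilon(j)=-1$ by \ref{dynk1}, and moreover $a_{ij}=-1$ in the simply laced case, which is odd. Hence in the exponent only the parity of $\sum_{i,j}a_{ij}n_im_j$ matters (since the $i=j$ contribution is even, equal to $2n_im_i$). The ratio therefore equals $(-1)^{\sum_{i,j}a_{ij}n_im_j}=(-1)^{\langle\alpha,\beta\rangle}$ by the computation of Example~\ref{heta0}. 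Finally, since $\alpha+\beta\in\Phi$ and $\beta\neq-\alpha$, property \ref{ADE0}(b) gives $\langle\alpha,\beta\rangle=-1$, so the ratio is $-1$, as required.

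The only point needing care is the reindexing and parity argument in the second identity; everything else is a direct substitution. The hypothesis that $A$ is simply laced is used in two essential places: to guarantee $A$ is symmetric (enabling the reindexing) and to force the off-diagonal entries $a_{ij}$ to be odd (so that parity of the exponent is controlled by $\langle\alpha,\beta\rangle$).
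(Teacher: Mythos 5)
Your proof is correct and follows essentially the same route as the paper: both argue directly from the formula ($\clubsuit$), using the symmetry of $A$, the rule $\epsilon(i)=-\epsilon(j)$ when $a_{ij}\neq 0$, the computation of Example~\ref{heta0}, and \ref{ADE0}(b) to get $\langle\alpha,\beta\rangle=-1$. Your phrasing as a ratio plus a parity argument (and the remark that $a_{ij}=-1$ is odd, which is not actually needed) is only a cosmetic reorganization of the paper's substitution $\epsilon(i)^{a_{ij}n_im_j}=(-\epsilon(j))^{a_{ji}m_jn_i}$.
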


\begin{proof} We have $\sgn(-\alpha)=-\sgn(\alpha)$, $\sgn(\-\beta)=-
\sgn(\beta)$ and $\sgn(-\alpha-\beta)=-\sgn(\alpha+\beta)$. Changing
$\alpha$ to $-\alpha$ and $\beta$ to $-\beta$ does not change anything
in the product over $i,j\in I$ in the definition of $\heta(\alpha,\beta)$.
Hence, we certainly have $\heta(-\alpha,-\beta)=-\heta(\alpha,\beta)$.
Now consider the relation between $\heta(\alpha,\beta)$ and $\heta(\beta,
\alpha)$. Let $c:=\sgn(\alpha)\sgn(\beta)\sgn(\alpha+\beta)$. If 
$i,j\in I$ are such that $a_{ij}=a_{ji}\neq 0$, then $\epsilon(i)=
-\epsilon(j)$. Consequently, we can re-write $\heta(\alpha, \beta)$ as 
\[\heta(\alpha,\beta)= c\prod_{i,j\in I} (-\epsilon(j))^{a_{ji}m_jn_i} 
=\heta(\beta,\alpha)\prod_{i,j\in I} (-1)^{a_{ij}n_im_j}.\]
By Example~\ref{heta0}, the right hand side equals $\heta(\beta,\alpha)
(-1)^{\langle \alpha,\beta\rangle}$. Since $\alpha+\beta\in \Phi$,
we have $\langle \alpha,\beta\rangle=-1$ by \ref{ADE0}(b). Hence,
$\heta(\alpha,\beta)= -\heta(\beta,\alpha)$.
\end{proof}

\begin{lem} \label{heta2} Assume that $A$ is simply laced. Let $l\in I$ 
and $\alpha,\beta\in \Phi$ be such that $\alpha_l+\alpha\in \Phi$ and 
$\alpha_l+\alpha+\beta\in \Phi$. Also assume that $\alpha\neq \beta$ and
$\beta\neq \pm \alpha_l$. By Lemma~\ref{lem1}, we have either $\alpha+\beta
\in \Phi$ or $\alpha_l+\beta\in \Phi$ (but not both). Then 
\[ \heta(\alpha_l+\alpha,\beta)=\left\{\begin{array}{cl}
\heta(\alpha,\beta) & \quad \mbox{if $\alpha+\beta \in \Phi$},\\
-\heta(\alpha,\alpha_l+\beta) & \quad \mbox{if $\alpha_l+\beta\in 
\Phi$}.\end{array}\right.\]
\end{lem}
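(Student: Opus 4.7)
The plan is to substitute directly into the alternative form of $(\clubsuit)$ in Definition~\ref{def1} and compare both sides using two observations: (i) the sign invariance $\sgn(\gamma) = \sgn(\gamma+\alpha_l)$ whenever $\gamma$ and $\gamma+\alpha_l$ are both roots (immediate by inspection of coefficients: if $\gamma$ is positive so is $\gamma+\alpha_l$, and if $\gamma$ is negative with $\alpha_l$-coefficient $n_l$, the root condition on $\gamma+\alpha_l$ forces $n_l+1\leq 0$); and (ii) the transformation of the product part under the shifts $\alpha\mapsto\alpha_l+\alpha$ and $\beta\mapsto\alpha_l+\beta$. Writing $\alpha_l+\alpha = \sum_i(n_i+\delta_{il})\alpha_i$, these shifts multiply $\prod_i\epsilon(i)^{n_i\langle\alpha_i,\beta\rangle}$ by $\epsilon(l)^{\langle\alpha_l,\beta\rangle}$ and by $\prod_i\epsilon(i)^{n_ia_{il}}$, respectively.

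In Case 1, $\alpha+\beta\in\Phi$, so observation~(i) cancels every sign factor and the only surviving correction is $\epsilon(l)^{\langle\alpha_l,\beta\rangle}$. To show this is trivial I would apply \ref{ADE0}(b) to both $(\alpha_l,\alpha)$ and $(\alpha_l,\alpha+\beta)$: the excluded values $\alpha,\alpha+\beta\neq\pm\alpha_l$ follow from $\alpha_l+\alpha,\alpha_l+\alpha+\beta\in\Phi$, and one obtains $\langle\alpha_l,\alpha\rangle = \langle\alpha_l,\alpha+\beta\rangle = -1$, whence $\langle\alpha_l,\beta\rangle = 0$. Thus $\heta(\alpha_l+\alpha,\beta) = \heta(\alpha,\beta)$ as required.

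In Case 2, $\alpha_l+\beta\in\Phi$ with $\beta\neq\pm\alpha_l$ gives $\langle\alpha_l,\beta\rangle = -1$ by \ref{ADE0}(b), so the first correction is $\epsilon(l)$. After the sign cancellations from~(i), proving the identity reduces to showing $\prod_i\epsilon(i)^{n_ia_{il}} = -\epsilon(l)$, which then yields the ratio $\epsilon(l)/(-\epsilon(l)) = -1$. This product is the main obstacle and the sole place where the simply-laced hypothesis is essential: $a_{ll}=2$ trivialises the $i=l$ contribution, for $i\neq l$ one has $a_{il}\in\{0,-1\}$, and the colouring~\ref{dynk1} forces $\epsilon(i) = -\epsilon(l)$ whenever $a_{il}=-1$. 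Grouping these factors turns the product into $(-\epsilon(l))^s$ with
\[
s = \sum_{\substack{i\neq l\\ a_{il}=-1}}n_i = 2n_l - \langle\alpha_l,\alpha\rangle,
\]
and since $\alpha,\alpha_l+\alpha\in\Phi$ with $\alpha\neq\pm\alpha_l$ forces $\langle\alpha_l,\alpha\rangle=-1$, one gets $s = 2n_l+1$, which is odd. Hence the product equals $-\epsilon(l)$, completing the argument.
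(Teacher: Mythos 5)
Your proof is correct and follows essentially the same route as the paper: expand both sides via the second form of ($\clubsuit$), use that adding $\alpha_l$ to a root preserves its sign, and reduce everything to the correction factors $\epsilon(l)^{\langle\alpha_l,\beta\rangle}$ (which is $1$ in the first case since $\langle\alpha_l,\beta\rangle=0$, and $\epsilon(l)$ in the second) and $\prod_{i}\epsilon(i)^{n_ia_{il}}$. The only, harmless, variation is in the second case: the paper identifies $\prod_i\epsilon(i)^{n_ia_{il}}$ with $\heta(\alpha,\alpha_l)=-\epsilon(l)$ via Lemma~\ref{heta1} and Example~\ref{def1a}, whereas you evaluate the same product directly by the parity count $s=2n_l-\langle\alpha_l,\alpha\rangle=2n_l+1$, i.e.\ you inline the content of those two statements.
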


\begin{proof} Let $c:=\sgn(\alpha)\sgn(\beta)\sgn(\alpha+ \beta)$. 
Since $\alpha$ and $\alpha_l+\alpha$ have the same sign, and $\alpha+\beta$ 
and $\alpha_l+\alpha+\beta$ have the same sign, we conclude that 
\[ \heta(\alpha_l+\alpha,\beta)=c\,\epsilon(l)^{(n_l+1)\langle \alpha_l,
\beta \rangle}\prod_{i\in I\setminus \{l\}} \epsilon(i)^{n_i\langle 
\alpha_i,\beta\rangle}=\epsilon(l)^{\langle \alpha_l,\beta \rangle}
\heta(\alpha,\beta).\]
Assume first that $\alpha+\beta\in \Phi$. Since $\alpha_l+\beta\not\in 
\Phi$, we have $\langle \alpha_l,\beta\rangle=0$ by \ref{ADE0}. Hence, 
$\heta(\alpha_l+\alpha,\beta\rangle=\heta(\alpha,\beta)$, as claimed. Now 
assume that $\alpha_l+\beta\in \Phi$. Since $\langle \alpha_l,\beta\rangle=
-1$ by \ref{ADE0}, we conclude that $\heta(\alpha_l+\alpha,\beta)=
\epsilon(l) \heta(\alpha,\beta)$. On the other hand, since $\beta$ and 
$\alpha_l+\beta$ have the same sign, we obtain 
\[ \heta(\alpha,\alpha_l+\beta)=c\,\prod_{i\in I} \epsilon(i)^{n_i
\langle \alpha_i,\alpha_l+\beta\rangle}=\heta(\alpha,\beta)
\prod_{i\in I} \epsilon(i)^{\langle \alpha_i,\alpha_l\rangle}. \]
Let $c':=\sgn(\alpha)\sgn(\alpha_l)\sgn(\alpha_l+\alpha)$. Then the product 
on the right hand side equals $c'\, \heta(\alpha,\alpha_l)$. Since $\alpha,
\alpha_l+\alpha$ have the same sign and since $\sgn(\alpha_l)=1$, 
we have $c'=1$. Furthermore, by Lemma~\ref{heta1} and Example~\ref{def1a}, 
we have $\heta(\alpha,\alpha_l)=-\heta(\alpha_l,\alpha)=-\epsilon(l)$. 
Hence, $\heta(\alpha, \alpha_l+\beta)=-\epsilon(l)\heta(\alpha,\beta)=-
\heta(\alpha_l+\alpha,\beta)$, as claimed.
\end{proof}

\begin{thm} \label{thm1} Assume that $A$ is simply laced. Let $\alpha,
\beta\in \Phi$ be such that $\alpha+\beta\in \Phi$. Then 
$[\be_\alpha^\epsilon, \be_\beta^\epsilon]=\heta(\alpha, \beta) 
\be_{\alpha+ \beta}$ with $\heta(\alpha,\beta)$ as in ($\clubsuit$).
\end{thm}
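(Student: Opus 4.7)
The plan is to prove $N^\epsilon_{\alpha,\beta} = \heta(\alpha,\beta)$ by induction on $\hgt(\alpha)$, first reducing to the case where $\alpha$ is a positive root. This reduction is legitimate because both $N^\epsilon$ and $\heta$ satisfy the identity $X(-\alpha,-\beta) = -X(\alpha,\beta)$: for $N^\epsilon$ this is Remark~\ref{sec20}, and for $\heta$ this is part of Lemma~\ref{heta1}.

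The base case $\hgt(\alpha) = 1$, i.e.\ $\alpha = \alpha_l$ for some $l \in I$, is immediate: the defining relations in \ref{cb4} give $N^\epsilon_{\alpha_l,\beta} = \epsilon(l)(q_{\alpha_l,\beta}+1) = \epsilon(l)$, using $q_{\alpha_l,\beta} = 0$ in the simply laced setting, and this matches $\heta(\alpha_l,\beta) = \epsilon(l)$ from Example~\ref{def1a}. For the inductive step with $\hgt(\alpha) \geq 2$, I would choose a simple root $\alpha_l$ such that $\alpha' := \alpha - \alpha_l$ is a positive root (always possible), and then apply Lemma~\ref{lem1} together with Lemma~\ref{heta2} to obtain matching recursions expressing $N^\epsilon_{\alpha,\beta}$ and $\heta(\alpha,\beta)$ in terms of quantities with first argument~$\alpha'$. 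Since $\alpha'$ has strictly smaller height, the inductive hypothesis yields the desired equality---provided that $\alpha' \neq \pm \beta$ and $\beta \neq \pm \alpha_l$.

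The main obstacle lies in the edge cases where the conditions of Lemma~\ref{lem1} fail. Two of them, namely $\alpha' = \beta$ and $\beta = \alpha_l$, cannot actually arise: each would force $\alpha + \beta$ to equal $\alpha_l + 2\alpha'$ or $2\alpha_l + \alpha'$ respectively, impossible in the simply laced case since $\alpha$-strings have length at most two (property \ref{ADE0}(a)). The remaining cases $\beta = -\alpha_l$ and $\beta = -\alpha'$ I would handle by exploiting the symmetry $N^\epsilon_{\alpha,\beta} = N^\epsilon_{-\beta,-\alpha}$ (from antisymmetry of the Lie bracket combined with $\omega$-invariance, as in Remark~\ref{sec20}) together with its $\heta$-analogue $\heta(\alpha,\beta) = \heta(-\beta,-\alpha)$ (immediate from Lemma~\ref{heta1}): when $\beta = -\alpha_l$ the computation reduces to the base case $(\alpha_l, -\alpha)$, and when $\beta = -\alpha'$ it reduces to $(\alpha', -\alpha)$, where $\alpha'$ has strictly smaller height, so the inductive hypothesis applies.
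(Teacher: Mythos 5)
Your proposal is correct and follows essentially the same route as the paper's proof: reduction to positive $\alpha$ via Remark~\ref{sec20} and Lemma~\ref{heta1}, induction on $\hgt(\alpha)$ with base case from \ref{cb4} and Example~\ref{def1a}, the inductive step via Lemmas~\ref{lem1} and~\ref{heta2}, and the same treatment of the degenerate cases (ruling out $\beta=\alpha-\alpha_l$ and $\beta=\alpha_l$ by \ref{ADE0}(a), and handling $\beta=-\alpha_l$, $\beta=-(\alpha-\alpha_l)$ by the symmetry $N^\epsilon_{\alpha,\beta}=N^\epsilon_{-\beta,-\alpha}$ and its $\heta$-analogue). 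No gaps to report.
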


\begin{proof} We must show that $N_{\alpha,\beta}^\epsilon=\heta(\alpha,
\beta)$ whenever $\alpha,\beta,\alpha+\beta\in \Phi$. By Remark~\ref{sec20}
and Lemma~\ref{heta1}, it is sufficient to consider the case where 
$\alpha$ is a positive root (but $\beta$ may be arbitrary). Assume now 
that $\alpha\in \Phi^+$ ($=$ set of positive roots). Then we proceed by 
induction on $\hgt(\alpha)$. If $\hgt(\alpha)=1$, then $\alpha=\alpha_i$ 
for some $i\in i$. In this case, the assertion holds by Example~\ref{def1a} 
and the formulae in \ref{cb4}. Now let $\hgt(\alpha)>1$. It is well-known 
that we can find some $l\in I$ such that $\gamma:=\alpha-\alpha_l\in 
\Phi^+$. Then $\alpha_l+\gamma\in \Phi$ and $\alpha_l+ \gamma+\beta=
\alpha+\beta\in \Phi$. Assume first that $\gamma\neq \pm \beta$ and
$\beta \neq \pm \alpha_l$. Then we can apply Lemma~\ref{lem1} and obtain
\[ N_{\alpha,\beta}^\epsilon=N_{\alpha_l+\gamma,\beta}^\epsilon=
\left\{\begin{array}{cl} N_{\gamma,\beta}^\epsilon & \quad
\mbox{if $\gamma+\beta \in \Phi$},\\ -N_{\gamma,\alpha_l+\beta}^\epsilon 
& \quad \mbox{if $\alpha_l+\beta\in \Phi$}.\end{array}\right.\]
By induction, we already know that $N_{\gamma,\beta}^\epsilon=
\heta(\gamma,\beta)$ in the first case, and $N_{\gamma,\alpha_l+
\beta}^\epsilon= \heta(\gamma,\alpha_l+\beta)$ in the second case.
Using Lemma~\ref{heta2}, we conclude that the result equals
$\heta(\alpha,\beta)$ in both cases, as desired.

It remains to deal with the cases where $\gamma=\pm \beta$ or $\beta=
\pm \alpha_l$. If $\gamma=\beta$, then $\alpha_l+\gamma\in \Phi$ and
$\alpha_l+2\gamma= \alpha_l+\gamma+\beta\in \Phi$, contradiction to 
\ref{ADE0}(a). On the hand, if $\gamma=-\beta$, then 
\begin{alignat*} {2}
N_{\alpha,\beta}^\epsilon&=N_{\alpha,-\gamma}^\epsilon=
N_{\gamma,-\alpha}^\epsilon &\qquad& \mbox{(by Remark~\ref{sec20})},\\
\heta(\alpha,\beta)&=\heta(\alpha,-\gamma)=
\heta(\gamma,-\alpha) & \qquad & \mbox{(by Lemma~\ref{heta1})}.
\end{alignat*}
Now note that, by induction, the two right hand sides are equal; hence, so
are the left hand sides. Next, if $\beta=\alpha_l$, then $\gamma+\alpha_l
\in \Phi$ and $\gamma+2\alpha_l=\alpha+\beta\in \Phi$, contradiction to
\ref{ADE0}(a). On the other hand, if $\beta=-\alpha_l$, then 
\begin{alignat*} {2}
N_{\alpha,\beta}^\epsilon&=N_{\alpha,-\alpha_l}^\epsilon=N_{\alpha_l,
-\alpha}^\epsilon &\qquad& \mbox{(again by Remark~\ref{sec20})},\\
\heta(\alpha,\beta)&=\heta(\alpha,-\alpha_l)=\heta(\alpha_l,-\alpha) & 
\qquad & \mbox{(again by Lemma~\ref{heta1})}.
\end{alignat*}
By \ref{cb4} and Example~\ref{def1a}, the two right hand sides are both
equal to~$\epsilon(l)$.
\end{proof}

\section{Folding} \label{sec3}

We keep the general notation of the previous sections. We assume from
now that the Cartan matrix $A$ is simply laced; thus, $A$ is of type 
$A_n$ ($n\geq 1$), $D_n$ ($n\geq 3$) or $E_n$ ($n=6,7,8$). Furthermore,
let $I\rightarrow I$, $i \mapsto i^\prime$, be a bijection such that
\begin{alignat*}{2}
a_{ij}&=a_{i^\prime j^\prime} &&\qquad \mbox{for all $i,j\in I$}.\tag{a}
\\
a_{ii'}& =0 &&\qquad \mbox{for all $i\in I$ such that $i'\neq i$}.\tag{b}
\end{alignat*}
The first condition means that $i\mapsto i^\prime$ corresponds to a 
symmetry of the Dynkin diagram of $A$; the second condition means that,
if $i\neq i'$, then the nodes labelled by $i$ and $i'$ are not connected
in the Dynkin diagram. Let $d\geq 1$ denote the order of the bijection
$i\mapsto i^\prime$ (as an element of the symmetric group on~$I$). The 
non-trivial possibilities are listed in Table~\ref{Mautos}. (The last 
column will be explained below.) Note that there is also a non-trivial 
symmetry of order $2$ for $A$ of type $A_{2n}$ ($n \geq 1$), but 
condition (b) is not satisfied in this case. By the Isomorphism Theorem
in \cite[\S 14.2]{H}, there is a Lie algebra automorphism $\tau\colon \fg
\rightarrow \fg$ such that 
\[ \tau(e_i)=e_{i^\prime}, \quad \tau(f_i)=f_{i^\prime} \quad\mbox{and}
\quad \tau(h_i)=h_{i^\prime}\quad \mbox{for all $i\in I$}.\]
Since the permutation $i\mapsto i^\prime$ ($i\in I$) has
order $d$, we also have $\tau^d=\mbox{id}_\fg$.
 
\begin{table}[htbp] \caption{Diagram automorphisms}
\label{Mautos} \begin{center} {\small 
$\renewcommand{\arraystretch}{1.2} \begin{array}{cccc} \hline \mbox{Type 
of $A$} & d & \mbox{orbits of $i\mapsto i^\prime$}  & \mbox{$\tilde{A}$} 
\\ \hline A_{2n-1} \;(n\geq 2) & 2 & \{n\},\{n{-}1,n{+}1\},\{n{-}2,n{+}2\},
\ldots, \{1,2n{-}1\} & C_n\\ D_{n+1} \; (n\geq 3) & 2 & \{1,2\},\{3\},\{4\},
\ldots, \{n{+}1\} & B_n\\ D_4 & 3 & \{3\},\{1,2,4\} & G_2\\
E_6 & 2 & \{2\},\{4\},\{3,5\},\{1,6\} & F_4\\\hline \end{array}$}
\end{center}
\end{table}

\begin{abs} \label{auto1} Let $\tI$ be a set of representatives for 
the orbits of the bijection $i\mapsto i^\prime$ of $I$. Note that we can 
choose $\tI$ such that the subdiagram (of the original diagram of~$A$)
formed by the nodes in $\tI$ is connected. (For example, this holds if 
we select the first element listed in each orbit in the 3rd column of 
Table~\ref{Mautos}.) For each $i\in I$, let $d_i$ be the size of the 
orbit of~$i$. (Since $d=1$ or $d$ is a prime, we have $d_i=1$ or 
$d_i=d$.) Now define 
\[ \tilde{A}=(\tilde{a}_{ij})_{i,j\in \tI}\qquad \mbox{where} \qquad
\tilde{a}_{ij}:=\left\{\begin{array}{cl} d_i a_{ij}  & \quad \mbox{if
$d_i>d_j=1$},\\ a_{ij} & \quad \mbox{otherwise}.\end{array}\right.\]
Then a case--by--case verification shows that $\tilde{A}$ (for $d>1$) is 
an indecomposable Cartan matrix of the type specified in the last column
of Table~\ref{Mautos}. If we order the rows and columns of $\tilde{A}$ 
according to the list of orbits in the 2nd column, then the diagram of 
$\tilde{A}$ corresponds exactly to the one with the same name 
in Table~\ref{Mdynkintbl}. 

Let $\epsilon \colon I\rightarrow \{\pm 1\}$ be the function defined
in Table~\ref{Mdynkintbl} for $A$ of type $A_{2n-1}$, $D_{n+1}$ and $E_6$.
Then we notice that $\epsilon$ is constant on the orbits of~$I$. So, by 
restriction, we obtain analogous functions $\tilde{\epsilon} \colon \tI 
\rightarrow \{\pm 1\}$ for $\tilde{A}$ of type $B_n$, $C_n$, $G_2$ 
and $F_4$, as specified in Table~\ref{Mdynkintbl}. 
\end{abs}

We will now show that the automorphism $\tau\colon \fg\rightarrow \fg$ is 
compatible with the various constructions in the previous sections. First 
note that $\tau$ restricts to the linear map $\tau_\fh\colon\fh\rightarrow 
\fh$ such that $h_i\mapsto h_{i^\prime}$ ($i\in I$). Let $\tau_\fh^*\colon 
\fh^*\rightarrow \fh^*$ be the contragredient dual map, that is, 
$\tau_\fh^*(\lambda)=\lambda \circ \tau_\fh^{-1}$ for $\lambda\in \fh^*$. 

\begin{lem} \label{auto2} For any $\alpha\in \Phi$, we have $\alpha^\prime
:= \tau_\fh^*(\alpha) \in \Phi$. The map $\alpha \mapsto \alpha^\prime$ is 
a permutation of $\Phi$ such that $\alpha_{j}^\prime=\tau_\fh^*(\alpha_j)=
\alpha_{j^\prime}$ for all $j\in I$. Since $\alpha\mapsto \alpha^\prime$
is linear, we have $\hgt(\alpha)=\hgt(\alpha^\prime)$.
\end{lem}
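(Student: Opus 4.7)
The plan is to show that $\tau$ preserves the Cartan subalgebra, deduce that it permutes the root spaces, and then identify the induced permutation of $\Phi$ with $\tau_\fh^*$. First, I would observe that since the co-roots $\{h_i \mid i \in I\}$ span $\fh$ (as $\fg$ is semisimple) and $\tau$ sends $h_i$ to $h_{i'}$, the automorphism $\tau$ restricts to a linear automorphism $\tau_\fh \colon \fh \to \fh$, so its contragredient $\tau_\fh^*$ is a well-defined linear automorphism of $\fh^*$.

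The key computation is the following: for any $\alpha \in \fh^*$, $x \in \fg_\alpha$ and $h \in \fh$, apply $\tau^{-1}$ inside the bracket and use $[h,x] = \alpha(h)x$ to get
\[
[h,\tau(x)] = \tau\bigl([\tau_\fh^{-1}(h),x]\bigr) = \alpha\bigl(\tau_\fh^{-1}(h)\bigr)\,\tau(x) = \tau_\fh^*(\alpha)(h)\,\tau(x).
\]
Hence $\tau(\fg_\alpha) \subseteq \fg_{\tau_\fh^*(\alpha)}$. Since $\tau$ is an automorphism, it sends non-zero elements to non-zero elements, so $\alpha \in \Phi$ forces $\tau_\fh^*(\alpha) \in \Phi$. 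As $\tau_\fh^*$ is already a bijection of $\fh^*$, its restriction to $\Phi$ is a permutation.

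To verify $\tau_\fh^*(\alpha_j) = \alpha_{j'}$, I would evaluate both sides on the basis $\{h_i \mid i \in I\}$ of $\fh$. Writing $\sigma\colon I \to I$ for the bijection $i\mapsto i'$, we have $\tau_\fh^{-1}(h_i) = h_{\sigma^{-1}(i)}$, so
\[
\tau_\fh^*(\alpha_j)(h_i) = \alpha_j\bigl(h_{\sigma^{-1}(i)}\bigr) = a_{\sigma^{-1}(i),j},
\qquad
\alpha_{j'}(h_i) = a_{i,\sigma(j)}.
\]
These agree because the $\sigma$-invariance condition $a_{ij} = a_{i'j'}$ (condition~(a)) applied with $i$ replaced by $\sigma^{-1}(i)$ gives exactly $a_{\sigma^{-1}(i),j} = a_{i,\sigma(j)}$. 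Thus the two linear functionals coincide on a spanning set, hence on all of $\fh$.

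Finally, since $\alpha \mapsto \alpha'$ is linear and sends simple roots to simple roots, if $\alpha = \sum_{i\in I} n_i\alpha_i$ then $\alpha' = \sum_{i\in I} n_i \alpha_{i'}$, so both the condition ``all $n_i$ have the same sign'' and the sum $\sum_i n_i$ are preserved, giving $\hgt(\alpha) = \hgt(\alpha')$. The only mildly delicate point is the bookkeeping with $\sigma$ versus $\sigma^{-1}$ in the contragredient map; everything else is a direct unwinding of definitions.
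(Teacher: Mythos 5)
Your proposal is correct and follows essentially the same route as the paper: the bracket computation showing $\tau(\fg_\alpha)\subseteq\fg_{\tau_\fh^*(\alpha)}$, and the verification of $\tau_\fh^*(\alpha_j)=\alpha_{j^\prime}$ by evaluating on the co-roots and invoking $a_{ij}=a_{i^\prime j^\prime}$, are exactly the paper's argument, up to evaluating at $h_i$ rather than $h_{i^\prime}$.
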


\begin{proof} Let $\alpha\in \Phi$ and $x\in \fg_\alpha$. Note that, by
the definition $\tau_\fh^*$, we have $\alpha^\prime(\tau(h))=
\tau_\fh^*(\alpha)(\tau(h))=\alpha(h)$ for $h\in \fh$, and so 
\[[\tau(h),\tau(x)]=\tau([h,x])=\tau(\alpha(h)x)=\alpha(h)\tau(x)
=\alpha^\prime(\tau(h))\tau(x).\]
Since $\fh=\{\tau(h)\mid h\in \fh\}$, this shows that $\tau(x)\in 
\fg_{\alpha^\prime}$. So, since $\fg_\alpha\neq \{0\}$, we also have 
$\fg_{\alpha^\prime}\neq \{0\}$ and, hence, $\alpha^\prime\in \Phi$. 
Finally, let $\alpha=\alpha_j$ for some $j\in I$. Then 
\[\alpha_j^\prime(h_{i^\prime})=\tau_\fh^*(\alpha_j)(h_{i^\prime})=
\alpha_j(\tau_\fh^{-1}(h_{i^\prime}))=\alpha_j(h_i)=a_{ij}=
a_{i^\prime j^\prime}=\alpha_{j^\prime}(h_{i^\prime})\]
for all $i\in I$. Since $\{h_{i^\prime}\mid i \in I\}$ is a basis of $\fh$, 
this yields $\alpha_j^\prime=\tau_\fh^*(\alpha_j)=\alpha_{j^\prime}$.
\end{proof}

\begin{prop} \label{prop34} Let $\cB^\epsilon=\{h_i \mid i \in I\}\cup 
\{\be_\alpha^\epsilon\mid \alpha\in \Phi\}$ be the $\epsilon$-canonical 
Chevalley basis of $\fg$ (as in \ref{cb4}). Then we have 
\[ \tau(\be_\alpha^\epsilon)=\be_{\alpha^\prime}^\epsilon\quad \mbox{and}
\quad \tau(h_\alpha)=h_{\alpha^\prime}\quad\mbox{for all $\alpha \in 
\Phi$}.\] 
Furthermore, $N_{\alpha,\beta}^\epsilon=N_{\alpha^\prime,
\beta^\prime}^\epsilon$ if $\alpha,\beta,\alpha+\beta\in \Phi$.
\end{prop}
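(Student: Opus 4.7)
The plan is to prove $\tau(\be_\alpha^\epsilon)=\be_{\alpha'}^\epsilon$ for all $\alpha\in \Phi$ by induction on $|\hgt(\alpha)|$, using the inductive characterisation of the canonical basis in~\ref{cb4}. Once this is in hand, the remaining two assertions drop out by applying $\tau$ to the identities $[\be_\alpha^\epsilon,\be_{-\alpha}^\epsilon]=(-1)^{\hgt(\alpha)}h_\alpha$ and $[\be_\alpha^\epsilon,\be_\beta^\epsilon]=N_{\alpha,\beta}^\epsilon \be_{\alpha+\beta}^\epsilon$.

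For the base case $\alpha=\alpha_i$ with $i\in I$, the key point is that by \ref{auto1} the function $\epsilon\colon I\rightarrow \{\pm 1\}$ is constant on the orbits of $i\mapsto i'$; hence $\epsilon(i)=\epsilon(i')$ and
\begin{equation*}
\tau(\be_{\alpha_i}^\epsilon)=\tau(\epsilon(i)e_i)=\epsilon(i)e_{i'}=\epsilon(i')e_{i'}=\be_{\alpha_{i'}}^\epsilon,
\end{equation*}
which equals $\be_{\alpha_i'}^\epsilon$ by Lemma~\ref{auto2}; the case $\alpha=-\alpha_i$ is analogous. For the inductive step, assume $\alpha\in \Phi^+$ with $\hgt(\alpha)\geq 2$ and choose $i\in I$ so that $\gamma:=\alpha-\alpha_i\in \Phi^+$. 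Apply $\tau$ to $[e_i,\be_\gamma^\epsilon]=(q_{\alpha_i,\gamma}+1)\be_\alpha^\epsilon$: using $\tau(e_i)=e_{i'}$ together with the induction hypothesis, the left hand side becomes $[e_{i'},\be_{\gamma'}^\epsilon]$, and since $\gamma'+\alpha_{i'}=(\gamma+\alpha_i)'=\alpha'\in \Phi$, the rule in \ref{cb4} rewrites this as $(q_{\alpha_{i'},\gamma'}+1)\be_{\alpha'}^\epsilon$. The only substantive step is the equality $q_{\alpha_i,\gamma}=q_{\alpha_{i'},\gamma'}$, and this follows because $\tau$ sends $\fg_\beta$ to $\fg_{\beta'}$ for every $\beta\in \Phi$ (as in the proof of Lemma~\ref{auto2}): the permutation $\beta\mapsto \beta'$ of $\Phi$ preserves all root strings, so $\gamma-k\alpha_i\in \Phi$ exactly when $\gamma'-k\alpha_{i'}\in \Phi$. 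Cancelling the common non-zero factor gives $\tau(\be_\alpha^\epsilon)=\be_{\alpha'}^\epsilon$; the negative case uses the $f_i$-rule analogously, choosing $i$ with $\gamma:=\alpha+\alpha_i\in \Phi^-$.

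With the first identity established, applying $\tau$ to $[\be_\alpha^\epsilon,\be_{-\alpha}^\epsilon]=(-1)^{\hgt(\alpha)}h_\alpha$ yields $(-1)^{\hgt(\alpha)}\tau(h_\alpha)=[\be_{\alpha'}^\epsilon,\be_{-\alpha'}^\epsilon]=(-1)^{\hgt(\alpha')}h_{\alpha'}$, and $\hgt(\alpha)=\hgt(\alpha')$ by Lemma~\ref{auto2}, whence $\tau(h_\alpha)=h_{\alpha'}$. Applying $\tau$ to $[\be_\alpha^\epsilon,\be_\beta^\epsilon]=N_{\alpha,\beta}^\epsilon \be_{\alpha+\beta}^\epsilon$ and using linearity of $\beta\mapsto \beta'$ to identify $(\alpha+\beta)'=\alpha'+\beta'$, one obtains $N_{\alpha',\beta'}^\epsilon\be_{(\alpha+\beta)'}^\epsilon=N_{\alpha,\beta}^\epsilon \be_{(\alpha+\beta)'}^\epsilon$, hence $N_{\alpha,\beta}^\epsilon=N_{\alpha',\beta'}^\epsilon$. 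The main conceptual point of the whole argument---and its only non-trivial input beyond \ref{auto1}---is that the canonical basis is uniquely pinned down by the relations in \ref{cb4}, and those relations are manifestly preserved by any automorphism induced by a diagram symmetry on which $\epsilon$ is constant.
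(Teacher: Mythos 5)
Your proposal is correct and takes essentially the same approach as the paper: induction on height using the defining relations of \ref{cb4}, with the base case resting on $\epsilon$ being constant on the orbits of $i\mapsto i'$, and the claims about $\tau(h_\alpha)$ and $N_{\alpha,\beta}^\epsilon$ obtained by applying $\tau$ to $[\be_\alpha^\epsilon,\be_{-\alpha}^\epsilon]=(-1)^{\hgt(\alpha)}h_\alpha$ and to $[\be_\alpha^\epsilon,\be_\beta^\epsilon]=N_{\alpha,\beta}^\epsilon\be_{\alpha+\beta}^\epsilon$, using Lemma~\ref{auto2}. The only (harmless) variations are that you handle negative roots by a parallel induction with the $f_i$-rule, where the paper instead transfers the positive-root case via the automorphism $\omega$ commuting with $\tau$, and that you justify $q_{\alpha_i,\gamma}=q_{\alpha_{i'},\gamma'}$ by noting that $\beta\mapsto\beta'$ preserves root strings, where the paper simply observes both are $0$ since $A$ is simply laced.
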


\begin{proof} Since $[\be_{\alpha}^\epsilon,\be_{-\alpha}^\epsilon]=
(-1)^{\hgt(\alpha)}h_\alpha$, it suffices to prove the assertion about
$\tau(\be_{\alpha}^\epsilon)$. First assume that $\alpha \in \Phi^+$. 
Then we proceed by induction on the height $\hgt(\alpha)$. If 
$\hgt(\alpha)=1$, then $\alpha=\alpha_i$ for some $i\in I$. By inspection 
of Table~\ref{Mautos}, we see that $\epsilon(i)=\epsilon(i^\prime)$. So we 
obtain
\[\tau(\be_{\alpha_i}^\epsilon)=\tau\bigl(\epsilon(i)e_i\bigr)=
\epsilon(i)\tau(e_i)=\epsilon(i)e_{i^\prime}=\epsilon(i)\epsilon(i^\prime)
\be_{\alpha_{i^\prime}} =\be_{\alpha_i^\prime}^\epsilon,\]
as required. Now let $\hgt(\alpha)>1$. Then, as is well--known, there exists
some $i\in I$ such that $\beta:=\alpha-\alpha_i\in \Phi^+$. By induction, we 
already know that $\tau(\be_\beta^\epsilon)=\be_{\beta^\prime}^\epsilon$. 
We have $[e_i,\be_\beta^\epsilon]=(q_{\alpha_i,\beta}+1)
\be_{\alpha}^\epsilon$. Since $A$ is simply laced, we have $q_{\alpha,
\beta}=0$; see \ref{ADE0}(a). This yields 
\[ \tau(\be_{\alpha}^\epsilon)=\tau\bigl([e_i,\be_\beta^\epsilon]\bigr)
=[\tau(e_i), \tau(\be_\beta^\epsilon)]=[e_{i^\prime}, 
\be_{\beta^\prime}^\epsilon].\] 
Since $\alpha_{i^\prime}+\beta^\prime=\tau_\fh^*(\alpha_i)+\tau_\fh^*(\beta)=
\tau_\fh^*(\alpha_i+\beta)=\tau_\fh^*(\alpha)=\alpha^\prime\in\Phi$, the 
right hand side of the above identity equals $(q_{\alpha_{i^\prime},
\beta^\prime}+1)\be_{\alpha^\prime}^\epsilon$. But, again, we have
$q_{\alpha_{i^\prime},\beta^\prime}=0$ and so $\tau(\be_{\alpha}^\epsilon)
=\be_{\alpha^\prime}^\epsilon$, as desired. 

Thus, the assertion holds for all $\alpha\in\Phi^+$. In order to deal
with negative roots, we consider the automorphism $\omega\colon \fg 
\rightarrow \fg$ in \ref{chevb}; we have $\omega(e_i)=f_i$, $\omega(f_i)=
e_i$ and $\omega(h_i)=-h_i$ for $i\in I$. One easiliy sees that $\omega$ 
and $\tau$ commute with each other. By \ref{cb4}, we have 
$\omega(\be_\alpha^\epsilon)=-\be_{-\alpha}^\epsilon$ for all $\alpha 
\in \Phi$. Now let $\alpha\in \Phi^-$. Then $-\alpha\in \Phi^+$ 
and so we already know that $\tau(\be_{-\alpha}^\epsilon)=
\be_{(-\alpha)^\prime}^\epsilon=\be_{-\alpha^\prime}^\epsilon$. Hence, 
since $\be_\alpha^\epsilon=-\omega(\be_{-\alpha}^\epsilon)$, we obtain
\[\tau(\be_{\alpha}^\epsilon)=-(\tau \circ \omega)
(\be_{-\alpha}^\epsilon)=-(\omega \circ \tau)
(\be_{-\alpha}^\epsilon)=-\omega(\be_{-\alpha^\prime}^\epsilon)
=\be_{\alpha^\prime}^\epsilon,\]
as required. The assertion about $N_{\alpha,\beta}^\epsilon$ is now
clear since $\tau$ is a homomorphism.
\end{proof}

\begin{abs} \label{auto3} We set $\tfg:= \{x\in \fg\mid \tilde{\tau}(x)=
x\}$; clearly, this is a Lie subalgebra of $\tfg$. For any $i\in I$ we 
denote by $\ovi$ the orbit of $i$ under the permutation $i\mapsto i^\prime$ 
of $I$; let $d_i:=|\ovi|$. Recall from \ref{auto1} that $\tI$ is a (certain) 
set of representatives of those orbits. Let $i\in \tilde{I}$ and $\ovi=
\{i_1,\ldots,i_r\}$ be the orbit of $i$ where $r=d_i$; then we set 
\[\te_i :=e_{i_1}+\ldots +e_{i_r}, \qquad \tf_i :=f_{i_1}+\ldots +f_{i_r},
\qquad \tha_i :=h_{i_1}+\ldots +h_{i_r}.\]
These elements only depend on the orbit of $i$; furthermore,
$\te_i,\tf_i,\tha_i\in \tfg$ for all $i\in \tI$. A straightforward 
computation, using the relations in \ref{eps1} and the definition of 
$\tilde{A}= (\tilde{a}_{ij})_{i,j \in \tI}$ in \ref{auto1}, shows that 
\begin{alignat*}{2}
[\te_i,\tf_i] &=\tha_i \quad&\mbox{and}\quad & [\te_i,\tf_j]=0 \quad
\mbox{for $i,j\in I$ such that $i\neq j$},\\
[\tha_i,\te_j] &=\tilde{a}_{ij}\te_j \quad&\mbox{and}\quad &
[\tha_i,\tf_j]=-\tilde{a}_{ij}\tf_j \quad\mbox{for all $i,j\in \tI$}.
\end{alignat*}
Also note that $\tfh:=\langle  \tha_i \mid i \in \tI\rangle_\Q
\subseteq \tfg$ is an abelian subalgebra (since $[h_i,h_j]=0$ for all 
$i,j\in I$). 
\end{abs}

\begin{abs} \label{auto4} For any $\alpha\in \Phi$, we denote by $\oalpha$ 
the orbit of $\alpha$ under the permutation $\alpha\mapsto \alpha'$ 
of~$\Phi$; again, we have $|\oalpha|\in \{1,d\}$. We set
\[ \textstyle \tbe_\alpha^\epsilon :=\sum_{\beta\in \oalpha}
\be_{\beta}^\epsilon\in \fg.\]
By Proposition~\ref{prop34}, we have $\tbe_\alpha^\epsilon\in \tfg$. Also
note that we certainly have $\tbe_\alpha^\epsilon=\tbe_\beta^\epsilon$ 
for all $\beta\in \oalpha$. So let $\tPhi$ be a set of representatives for
the orbits of $\Phi$ under the permutation $\alpha \mapsto \alpha^\prime$. 
Since $\cB^\epsilon=\{h_i\mid i \in I\} \cup \{\be_\alpha^\epsilon\mid\alpha 
\in \Phi\}$ is a basis of $\fg$, and since $\tau$ is an automorphism of 
finite order which permutes the elements of $\cB^\epsilon$ (see again
Proposition~\ref{prop34}), it easily follows by an elementary argument that 
\[\tcB^\epsilon:=\{\tha_i\mid i \in \tilde{I}\}\cup \{\tbe_\alpha^\epsilon 
\mid \alpha \in \tPhi\}\qquad \mbox{is a basis of $\tfg$}.\]
Note that the elements of $\tcB^\epsilon$ are integral linear combinations
of elements of the basis $\cB^\epsilon$ of $\fg$. Hence, since the complete 
multiplication table of $\fg$ with respect to~$\cB^\epsilon$ has only entries
in~$\Z$, it is already clear that the complete multiplication table of 
$\tfg$ with respect to $\tcB^\epsilon$ will only have entries in~$\Q$.
Now the following facts are standard; see, e.g., De Graaf 
\cite[\S 5.15]{graaf}, Kac \cite[\S 7.9]{kac} or Mitzman 
\cite[\S 3.2]{mitz}. 
\begin{itemize}
\item[(a)] The Lie algebra $\tfg$ is simple and $\tfh\subseteq \tfg$ is 
a Cartan subalgebra. 
\item[(b)] A system of Chevalley generators for $\tfg$ is given by 
$\{\te_i,\tf_i,\tha_i\mid i \in \tI\}$; the corresponding Cartan matrix 
is $\tilde{A}$ (of the type specified in Table~\ref{Mautos}).
\end{itemize}
\end{abs}

We shall deal with the roots for~$\tfg$ in a way that is somewhat different 
from \cite{graaf}, \cite{kac} or \cite{mitz}. This will also lead to an 
explicit description of the co-roots in Corollary~\ref{auto6a} below. 
Note that, in \cite{graaf} and \cite{kac}, the definition of co-roots is
not consistent with the general theory. (See the formulae in 
\cite[(5.18)]{graaf} or \cite[(7.9.3)]{kac}.) 

For $\alpha\in \Phi$ we denote by $\talpha \colon \tfh\rightarrow \C$ the 
restriction of $\alpha$ to the subspace $\tfh\subseteq \fh$. Note that all 
roots in the orbit $\oalpha$ have the same restriction to $\tfh$. (This
immediately follows from the formula $\alpha^\prime(\tau(h))=
\tau_\fh^*(\alpha)(h)=\alpha(h)$.) A priori, it could happen
that two roots have the same restriction to $\tfh$ even if they are
not in the same orbit. But the following result shows that this can 
not happen.

\begin{lem} \label{auto6} The roots of $\tfg$ with respect to $\tfh$ are 
given by the set $\{\talpha\mid \alpha \in \Phi\}\subseteq \tfh^*$, where 
$\talpha=\tbeta$ if and only if $\oalpha=\obeta$. We have $\tfg_{\talpha}=
\langle \tbe_\alpha^\epsilon \rangle_\C$ for all $\alpha\in \tPhi$.  
\end{lem}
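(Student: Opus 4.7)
The plan is to show that each $\tbe_\alpha^\epsilon$ is a weight vector for the adjoint action of $\tfh$ with nonzero weight $\talpha$, and then to invoke the fact (recorded in \ref{auto4}(a)) that $\tfg$ is a simple Lie algebra with Cartan subalgebra $\tfh$ in order to pin down its root space decomposition.

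First I would verify the weight property directly. For $\tha\in\tfh\subseteq\fh$ and $\beta\in\oalpha$ we have $[\tha,\be_\beta^\epsilon]=\beta(\tha)\be_\beta^\epsilon$, and as already noted just before the lemma the value $\beta(\tha)$ depends only on the orbit $\oalpha$ (because $\tha$ is $\tau_\fh$-fixed, so $\beta'(\tha)=\beta(\tau_\fh^{-1}(\tha))=\beta(\tha)$). Summing over the orbit yields $[\tha,\tbe_\alpha^\epsilon]=\talpha(\tha)\tbe_\alpha^\epsilon$.

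Second, I would show that $\talpha\neq 0$ for every $\alpha\in\Phi$. By construction $\tbe_\alpha^\epsilon$ is a sum of nonzero vectors in $\fg_\beta$ with $\beta\in\Phi$, so $\tbe_\alpha^\epsilon\notin\fh$ and a fortiori $\tbe_\alpha^\epsilon\notin\tfh$. On the other hand, since $\tfh$ is a Cartan subalgebra of the simple Lie algebra $\tfg$, its zero weight space in $\tfg$ equals $\tfh$ itself, so $\talpha=0$ would force $\tbe_\alpha^\epsilon\in\tfh$, a contradiction.

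Finally, I would use the Cartan decomposition of $\tfg$: all nonzero root spaces with respect to $\tfh$ are one-dimensional, and $\tbe_\alpha^\epsilon$ is a nonzero vector of weight $\talpha$, so it spans the root space for $\talpha$. If $\alpha,\beta\in\tPhi$ satisfied $\talpha=\tbeta$, then $\tbe_\alpha^\epsilon$ and $\tbe_\beta^\epsilon$ would be proportional; being distinct elements of the basis $\tcB^\epsilon$, this forces $\oalpha=\obeta$. A dimension count using $\tcB^\epsilon$ then shows that the cardinality of the root system of $\tfg$ equals $\dim\tfg-\dim\tfh=|\tPhi|$, so the injective map $\oalpha\mapsto\talpha$ is a bijection onto that root system. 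I do not foresee any genuine obstacle; the only non-formal ingredient is the standard fact that the zero weight space of a Cartan subalgebra in a simple Lie algebra equals the Cartan itself, which is immediate from the Cartan decomposition of $\tfg$ guaranteed by \ref{auto4}(a).
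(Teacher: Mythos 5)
Your proposal is correct and follows essentially the same route as the paper: compute that $\tbe_\alpha^\epsilon$ is a weight vector for $\tfh$ with weight $\talpha$ (using that $\beta(\tha)$ is constant on orbits), rule out $\talpha=0$ via $\tfg_0=\tfh$, use one-dimensionality of root spaces to get $\tfg_{\talpha}=\langle\tbe_\alpha^\epsilon\rangle_\C$, and use the fact that $\tcB^\epsilon$ is a basis to account for all roots and distinguish distinct orbits. The only difference is cosmetic: you argue injectivity plus a dimension count, whereas the paper reads off the full Cartan decomposition $\tfg=\tfh\oplus\bigoplus_{\alpha\in\tPhi}\tfg_{\talpha}$ directly from the basis, which yields the same conclusions at once.
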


\begin{proof} We claim that $[\tha_i,\tbe_\alpha^\epsilon]= \alpha(\tha_i)
\tbe_\alpha^\epsilon$ for $i \in \tI$ and $\alpha\in \Phi$. To see 
this, note again that $\alpha^\prime (\tau(h))=\tau(h)$ for $h\in \fh$ 
(by the definition of $\tau_\fh^*$). Hence, since $\tau(\tha_i)=\tha_i$, 
we have $\alpha^\prime (\tha_i)=\alpha(\tha_i)$, and so $\beta(\tha_i)=
\alpha(\tha_i)$ for all $\beta\in\oalpha$. It follows that
\[ [\tha_i,\tbe_\alpha^\epsilon]=
\sum_{\beta\in \oalpha} [\tha_i,\be_\beta^\epsilon]=
\sum_{\beta\in \oalpha} \beta(\tha_i)\be_\beta^\epsilon=
\alpha(\tha_i) \sum_{\beta\in \oalpha} \be_\beta^\epsilon=
\alpha(\tha_i)\tbe_\beta^\epsilon,\]
as desired. This means that $\tbe_\alpha^\epsilon\in \tfg_{\talpha}=$ 
weight space of $\tfg$ for $\talpha\in \tfh^*$. Since $\tfh=\tfg_0$, we
must have $\talpha\neq 0$; thus, $\talpha$ is a root of $\tfg$. Since 
the weight spaces corresponding to roots are $1$-dimensional, we conclude 
that $\tfg_\talpha=\langle \tbe_\alpha^\epsilon\rangle_\C$. Since 
$\tcB^\epsilon$ is a basis of $\tfg$, it follows that $\tfg=\tfh\oplus 
\bigoplus_{\alpha \in \tPhi} \tfg_\talpha$ is the Cartan decomposition
of $\tfg$; in particular, we must have $\talpha\neq \tbeta$ if 
$\oalpha\neq \obeta$.
\end{proof}

\begin{cor} \label{auto6a} In the above setting, the following holds.
\begin{itemize}
\item[{\rm (a)}] Let $\alpha\in \Phi$ be such that $\alpha\neq 
\alpha^\prime$. Then $\alpha\pm \alpha^\prime\not\in \Phi$ and
$\alpha\pm \alpha^{\prime\prime}\not \in \Phi$.
\item[{\rm (b)}] The co-root corresponding to $\alpha\in \tPhi$ is 
given by $\tha_\alpha^\epsilon:=\sum_{\beta\in \oalpha}h_{\beta} \in
\tfh$. Thus, we have $\tha_\alpha^\epsilon\in [\tfg_\alpha,
\tfg_{-\alpha}]$ and $\talpha(\tha_\alpha^\epsilon)=2$.
\end{itemize}
\end{cor}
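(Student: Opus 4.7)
The plan is to establish (a) first and then use it as the essential vanishing input in (b).

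For (a), I would argue by contradiction: suppose some $\gamma\in\{\alpha\pm\alpha',\,\alpha\pm\alpha''\}$ lies in $\Phi$. Since every element of $\oalpha$ has the same restriction $\talpha$ to $\tfh$, one computes $\tgamma\in\{0,\,\pm 2\talpha\}$. On the other hand, by Lemma~\ref{auto6} the element $\tgamma$ would then be a nonzero root of $\tfg$. But $0$ is not a root, and $\pm 2\talpha$ is not a root because the root system of the simple Lie algebra $\tfg$ is reduced and $\talpha$ is already a root. This contradiction proves (a).

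For (b), I would expand
\[ [\tbe_\alpha^\epsilon,\tbe_{-\alpha}^\epsilon]=\sum_{\beta,\gamma\in\oalpha}[\be_\beta^\epsilon,\be_{-\gamma}^\epsilon]. \]
For $\beta\neq\gamma$ in $\oalpha$ one has $\gamma\in\{\beta',\beta''\}$, so (a) applied to $\beta$ gives $\beta-\gamma\notin\Phi$ and the off-diagonal summand vanishes. The diagonal term at $\beta$ equals $(-1)^{\hgt(\beta)}h_\beta$ by the identity recorded in \ref{cb4}, and since the height is constant on orbits (Lemma~\ref{auto2}), the whole sum collapses to $(-1)^{\hgt(\alpha)}\tha_\alpha^\epsilon$. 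Because $\tbe_{\pm\alpha}^\epsilon$ span $\tfg_{\pm\talpha}$ by Lemma~\ref{auto6}, this already gives $\tha_\alpha^\epsilon\in[\tfg_\alpha,\tfg_{-\alpha}]$.

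It remains to verify $\talpha(\tha_\alpha^\epsilon)=2$. Since $\talpha$ is the restriction of $\alpha$ to $\tfh$,
\[ \talpha(\tha_\alpha^\epsilon)=\sum_{\beta\in\oalpha}\alpha(h_\beta)=\sum_{\beta\in\oalpha}\langle\beta,\alpha\rangle. \]
The $\beta=\alpha$ summand equals $2$ by the definition of $h_\alpha$ in \ref{coroot}; for each other $\beta\in\oalpha$, part (a) gives $\alpha\pm\beta\notin\Phi$, and so \ref{ADE0}(b) forces $\langle\beta,\alpha\rangle=0$ in the simply-laced setting. Thus $\talpha(\tha_\alpha^\epsilon)=2$, and the uniqueness characterization in \ref{coroot} identifies $\tha_\alpha^\epsilon$ as the co-root of~$\talpha$.

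The only mildly subtle point will be the cross-term vanishing in (b): statement (a) is phrased only for $\alpha$ itself, so in the $D_4\to G_2$ case (where the orbit has three elements) one must reapply (a) with the orbit mate $\beta$ in place of $\alpha$ in order to handle all pairs in the double sum. Everything else is essentially routine bookkeeping.
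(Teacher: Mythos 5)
Your argument is correct and follows essentially the same route as the paper: (a) by restricting to $\tfh$ and invoking reducedness via Lemma~\ref{auto6}, then (b) by expanding the double sum, killing the cross terms with (a) applied to the orbit mates, using constancy of height on orbits, and finally evaluating $\talpha(\tha_\alpha^\epsilon)=2$. The only cosmetic difference is that for the vanishing $\alpha(h_\beta)=0$ you cite the simply-laced facts \ref{ADE0}(a),(b) (legitimate here, since $A$ is assumed simply laced in this section) where the paper uses the general string property of \ref{coroot1}.
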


\begin{proof} (a) Assume that $\beta:=\alpha+\alpha'\in \Phi$. Since
$\alpha$ and $\alpha^\prime$ have the same restriction to $\tfh$, it
follows that $\tbeta=2\talpha$ is a root of $\tfg$, contradiction 
since the root system of a simple Lie algebra is reduced. Now assume 
that $\gamma:=\alpha-\alpha'\in \Phi$. Then $\tgamma=0$ is a root of
$\tfg$, again a contradiction. The proofs for $\alpha\pm \alpha^{\prime
\prime}$ are analogous.

(b) By Proposition~\ref{prop34}, it is clear that $\tha_\alpha\in\tfg$.
Now let $\beta,\gamma \in \oalpha$. If $\beta\neq \gamma$, then $\beta=
\gamma^\prime$ or $\beta=\gamma^{\prime\prime}$. Hence, (a) implies that 
$\beta-\gamma\not\in \Phi$ and so $[\be_\beta^\epsilon,
\be_{-\gamma}^\epsilon]=0$. This yields 
\[[\tbe_\alpha^\epsilon,\tbe_{-\alpha}^\epsilon]=
\sum_{\beta \in \oalpha} \sum_{\gamma\in \oalpha}
[\be_\beta^\epsilon,\be_{-\gamma}^\epsilon]=
\sum_{\beta \in \oalpha} [\be_\beta^\epsilon,\be_{-\beta}^\epsilon]\\
=\sum_{\beta \in \oalpha} (-1)^{\hgt(\beta)}h_\beta,\]
where the last equality holds by \ref{cb4}.
Since all roots in $\oalpha$ have the same height, the right hand side
equals $(-1)^{\hgt(\alpha)}\tha_\alpha$; thus, $\tha_\alpha\in 
[\tfg_\alpha,\tfg_{-\alpha}]$. Now consider $\talpha(\tha_\alpha^\epsilon)=
\sum_{\beta \in \oalpha} \alpha(h_\beta)$. Let $\beta\in \oalpha$.
If $\beta=\alpha$, then $\alpha(h_\beta)=\alpha(h_\alpha)=2$. If
$\beta\neq \alpha$, then $\beta=\alpha^\prime$ or $\beta=\alpha^{\prime
\prime}$. Assume, if possible, that $\alpha(h_\beta)\neq 0$.
Then the invariance property in \ref{coroot1} implies that 
$\alpha\pm \beta \in \Phi$, that is, $\alpha\pm \alpha^\prime\in \Phi$
or $\alpha\pm \alpha^{\prime\prime}\in \Phi$, contradiction to~(a). 
Hence, $\talpha(\tha_\alpha^\epsilon)=\alpha(h_\alpha)=2$.
\end{proof}

\begin{table}[htbp] \caption{Roots of type $D_4$ (see Example~\ref{typeD4})} 
\label{MtblD4} 
\begin{center} 
$\renewcommand{\arraystretch}{1.1} \begin{array}{ccc} \hline 
\mbox{orbits in $\Phi^+$} & \qquad & \talpha \\\hline
\{1000,0100,0001\} && \talpha_1\\
\{0010\} && \talpha_3\\
\{1010,0110,0011\} && \talpha_1+\talpha_3 \\ 
\{1110,1011,0111\} && 2\talpha_1+\talpha_3\\ 
\{1111\} && 3\talpha_1+\talpha_3 \\
\{1121\} && 3\talpha_1+2\talpha_3\\\hline\end{array}$
\end{center}
\end{table}

\begin{exmp} \label{typeD4} Let $A$ be of type $D_4$, where $I=\{1,2,3,4\}$
and $1^\prime=2$, $2^\prime=4$, $4^\prime=1$, $3^\prime=3$. Let $\{\alpha_1,
\alpha_2,\alpha_3,\alpha_4\}$ be a system of simple roots in $\Phi$. We 
take $\tI=\{1,3\}$; then we have two linearly independent restrictions
$\{\talpha_1,\talpha_3\}$. Table~\ref{MtblD4} shows the orbits of 
$\alpha\mapsto \alpha^\prime$ on the $12$ positive roots in $\Phi$, and 
expressions for $\talpha$ as linear combinations of $\talpha_1,\talpha_3$.
(In that table, we simply write $1011$ instead of $\alpha_1+\alpha_3+
\alpha_4$, for example.) In the second column, we recognise a root system 
of type $G_2$, as expected from Table~\ref{Mautos}. 
\end{exmp}

Here is a key property of the signs $\heta(\alpha,\beta)$ in
Definition~\ref{def1}.

\begin{lem} \label{lemheta} Let $\alpha,\beta\in \Phi$ be such that
$\alpha+\beta\in \Phi$. Let $S(\alpha,\beta)$ be the set of all pairs
$(\alpha_0,\beta_0)$ where $\alpha_0\in \oalpha$, $\beta_0\in \obeta$ and
$\alpha_0+\beta_0\in \Phi$. Then $\heta(\alpha,\beta)=\heta(\alpha_0,
\beta_0)$ for all $(\alpha_0, \beta_0)\in S(\alpha,\beta)$.
\end{lem}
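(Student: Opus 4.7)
My plan is to combine Theorem~\ref{thm1} and Proposition~\ref{prop34} to obtain the ``$\tau$-equivariance''
\[ \hat{\eta}^\epsilon(\gamma',\delta')=\hat{\eta}^\epsilon(\gamma,\delta)\quad\text{whenever $\gamma+\delta\in\Phi$},\]
and then use this, together with Lemma~\ref{auto6}, to reduce the lemma to a single residual case settled by direct computation with the formula $(\clubsuit)$. Indeed, Theorem~\ref{thm1} identifies $\hat{\eta}^\epsilon(\gamma,\delta)$ with $N^\epsilon_{\gamma,\delta}$ for $\gamma+\delta\in\Phi$, and Proposition~\ref{prop34} gives $N^\epsilon_{\gamma',\delta'}=N^\epsilon_{\gamma,\delta}$.

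Given $(\alpha_0,\beta_0)\in S(\alpha,\beta)$, I would write $\alpha_0$ as the $k$-th and $\beta_0$ as the $l$-th iterate of $\alpha,\beta$ under the orbit bijection $\gamma\mapsto\gamma'$. Iterating $\tau$-equivariance $k$ times (each intermediate sum lies in~$\Phi$, since $\tau$ permutes $\Phi$) reduces the lemma to
\[ \hat{\eta}^\epsilon(\alpha,\beta^{(m)})=\hat{\eta}^\epsilon(\alpha,\beta), \quad m:=l-k,\]
whenever $\alpha+\beta^{(m)}\in\Phi$, where $\beta^{(m)}$ denotes the $m$-th iterate of $\beta$. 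If $\alpha'=\alpha$, applying $\tau$-equivariance with the shift by $-m$ sends the pair $(\alpha,\beta^{(m)})$ to $(\alpha,\beta)$ and yields the identity; if $\beta'=\beta$, then $\beta^{(m)}=\beta$ and there is nothing to prove.

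The only remaining case is $\alpha'\neq\alpha$ and $\beta'\neq\beta$, so $|\oalpha|=|\obeta|=d$. Here Lemma~\ref{auto6} shows that $\alpha+\beta$ and $\alpha+\beta^{(m)}$ (having the same restriction $\talpha+\tbeta$ to $\tfh$) lie in the same $\tau$-orbit, so $\alpha+\beta^{(m)}=\alpha^{(r)}+\beta^{(r)}$ for some $r$. For $d=2$ this equation forces either $\beta'=\beta$ ($r=0$) or $\alpha'=\alpha$ ($r=1$), contradicting the case hypothesis, so the case is vacuous.

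Only $d=3$ remains, i.e.\ the folding $D_4\to G_2$. Here I would compute the ratio $\hat{\eta}^\epsilon(\alpha,\beta^{(m)})/\hat{\eta}^\epsilon(\alpha,\beta)$ directly from $(\clubsuit)$. The sign factor is $1$, because $\tau$ preserves positivity of roots. For the product factor, grouping exponents by $\sigma$-orbits $O$ of $I$ (on each of which $\epsilon$ is constant, by~\ref{auto1}) gives $\prod_O\epsilon(O)^{E_O}$ with $E_O=\sum_{i\in O}n_i(\langle\alpha_{\sigma^{-m}(i)},\beta\rangle-\langle\alpha_i,\beta\rangle)$; singleton orbits contribute $0$, and the total $\sum_O E_O$ simplifies to $\langle\alpha^{(-m)}-\alpha,\beta\rangle$. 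The hypotheses $\alpha+\beta,\alpha+\beta^{(m)}\in\Phi$ combined with \ref{ADE0}(b) give $\langle\alpha,\beta\rangle=-1=\langle\alpha,\beta^{(m)}\rangle$, and the identity $\langle\alpha,\beta^{(m)}\rangle=\langle\alpha^{(-m)},\beta\rangle$ (from $\tau(h_\gamma)=h_{\gamma'}$ in Proposition~\ref{prop34}) then forces the total to vanish. The hard part is really this last step: in the $D_4$ diagram the unique non-singleton $\sigma$-orbit on $I$ is $\{1,2,4\}$, so $E_{\{1,2,4\}}$ equals the (zero) total and the product is $1$. The argument exploits the fortuitous fact that $D_4\to G_2$ is the unique $d=3$ folding and its index set carries a single non-singleton $\sigma$-orbit, letting the vanishing of the total exponent force the vanishing of the one relevant orbit-exponent.
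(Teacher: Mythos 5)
Your argument is correct, and it shares the paper's skeleton — the key input in both cases is the $\tau$-equivariance $\heta(\gamma',\delta')=\heta(\gamma,\delta)$ obtained by combining Theorem~\ref{thm1} with Proposition~\ref{prop34}, together with the fact (via Lemma~\ref{auto6}) that any $\alpha_0+\beta_0$ with $(\alpha_0,\beta_0)\in S(\alpha,\beta)$ lies in the orbit of $\alpha+\beta$. Where you genuinely diverge is in the organisation and in the residual case. The paper splits into four cases according to whether $\alpha$, $\beta$ are fixed and whether $\alpha+\beta=\alpha'+\beta'$, and settles the one truly delicate situation ($d=3$, i.e.\ $D_4$) by inspecting Table~\ref{MtblD4}: it observes that one of $\alpha,\beta,\alpha+\beta$ must be $\pm\alpha_i$ with $i\in\{1,2,4\}$, and then checks the two surviving configurations for $\alpha+\beta=\alpha_1$ explicitly from $(\clubsuit)$. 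You instead reduce everything to showing $\heta(\alpha,\beta^{(m)})=\heta(\alpha,\beta)$, dispose of $d=2$ (and of the paper's Cases 3 and 4 simultaneously) by the clean orbit argument forcing $\alpha=\alpha'$ or $\beta=\beta'$, and finish $D_4$ by a computation rather than enumeration: the $\sgn$-factors agree, the exponent sum over $I$ equals $\langle\alpha^{(-m)},\beta\rangle-\langle\alpha,\beta\rangle=(-1)-(-1)=0$ by \ref{ADE0}(b) and $\tau(h_\gamma)=h_{\gamma'}$, singleton orbits contribute nothing, and since $\{1,2,4\}$ is the unique non-singleton orbit its exponent must itself vanish, so the product in $(\clubsuit)$ is $1$. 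All the steps check out (the bilinearity needed for $\langle\alpha^{(-m)},\beta\rangle$ is available from Example~\ref{heta0}, and $\alpha\neq\pm\beta$, $\alpha\neq\pm\beta^{(m)}$ are automatic because the sums are roots); only cosmetic points remain, e.g.\ the $d=2$ branch with $m\equiv 0$ is trivial rather than vacuous. The trade-off: the paper's route is quicker to verify because it leans on the small explicit table, while yours replaces the $D_4$ enumeration by a structural computation whose only case-specific input is that $D_4$ has a single non-singleton diagram-automorphism orbit — arguably more illuminating, though still not fully case-free.
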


\begin{proof} First note that, by Theorem~\ref{thm1} and 
Proposition~\ref{prop34}, we certainly have 
\begin{equation*}
\heta(\alpha_0,\beta_0)= \heta(\alpha_0^\prime,\beta_0^\prime) \qquad
\mbox{for all $(\alpha_0,\beta_0)\in S(\alpha,\beta)$}.\tag{$*$}
\end{equation*}
Also note that $\talpha_0+\tbeta_0=\talpha+\tbeta$ for all $(\alpha_0,
\beta_0)\in S(\alpha,\beta)$. Hence, by Lemma~\ref{auto6}, 
\begin{equation*}
(\alpha_0,\beta_0)\in S(\alpha,\beta)\quad \Rightarrow\quad \alpha_0+\beta_0
\in \underline{\alpha{+}\beta} \quad \mbox{(orbit of $\alpha+\beta$)}.
\tag{$\dagger$}
\end{equation*} 
Now we distinguish four cases. 

\smallskip
\noindent {\it Case 1}. Assume that $\alpha=\alpha^\prime$. Then 
$\alpha+\beta_0\in \Phi$ for all $\beta_0\in \obeta$. So 
$S(\alpha,\beta\}=\{(\alpha,\beta_0) \mid \beta_0\in \obeta\}$ and 
the desired property follows from~($*$).

\smallskip
\noindent {\it Case 2}. Assume that $\beta=\beta^\prime$. This is 
completely analogous to Case~1. 

\smallskip
\noindent {\it Case 3}. Assume that $\alpha\neq \alpha^\prime$ and $\beta
\neq \beta^\prime$ but $\alpha+\beta=\alpha^\prime+\beta^\prime$. The latter 
condition means that the orbit of $\alpha+\beta$ is just $\{\alpha+\beta\}$. 
Now assume, if possible, that $(\alpha,\beta^\prime)\in S(\alpha,\beta)$.
Then ($\dagger$) would imply $\alpha+\beta^\prime\in \{\alpha+\beta\}$, 
contradiction. Hence, we have $\alpha+\beta^\prime\not\in \Phi$.
Similarly, one sees that $\alpha+ \beta^{\prime\prime}\not\in \Phi$ if 
$d=3$. Consequently, we have $S(\alpha,\beta)=\{(\alpha,\beta), 
(\alpha^\prime, \beta^\prime)\}$ (if $d=2$) or $S(\alpha,\beta)= 
\{(\alpha,\beta), (\alpha^\prime,\beta^\prime), (\alpha^{\prime\prime}, 
\beta^{\prime \prime})\}$ (if $d=3$). So the desired assertion follows 
again from~($*$).

\smallskip
\noindent {\it Case 4}. Assume that $\alpha\neq \alpha^\prime$ and $\beta
\neq \beta^\prime$ and $\alpha+\beta\neq \alpha^\prime+\beta^\prime$.
First let $d=2$. Then $\oalpha=\{\alpha,\alpha^\prime \}$ and $\obeta=
\{\beta,\beta^\prime\}$. If we had $\alpha+\beta^\prime \in\Phi$, then 
$(\dagger$) would imply that $\alpha+\beta^\prime$ belongs to the orbit 
of $\alpha+\beta$,  contradiction because that orbit is $\{\alpha+\beta, 
\alpha^\prime+\beta^\prime\}$. Hence, $\alpha+ \beta^\prime\not\in
\Phi$. Similarly, one sees that $\alpha^\prime+ \beta\not\in \Phi$. 
Consequently, we have $S(\alpha,\beta)=\{(\alpha,\beta), (\alpha^\prime, 
\beta^\prime)\}$ and so, again, the desired property follows from~($*$).

Finally, assume that $d=3$, so we are in the situation of 
Example~\ref{typeD4}. By inspection of Table~\ref{MtblD4}, one sees
that one of $\alpha,\beta,\alpha+\beta$ must be equal to $\pm \alpha_i$
for some $i\in \{1,2,4\}$. If $\alpha=\pm \alpha_i$ or $\beta=\pm \alpha_i$,
then the desired assertion follows from Example~\ref{def1a} and 
Lemma~\ref{heta1}. So assume now that $\alpha+\beta=\pm \alpha_i$. 
Using ($*$) and Lemma~\ref{heta1}, it is enough to consider the case
where $\alpha+\beta=\alpha_1$. An inspection of Table~\ref{MtblD4} gives 
only two possibilities: 
\[ (\alpha,\beta)=(1110,-0110) \qquad\mbox{or} \qquad (\alpha,\beta)= 
(1011,-0011).\]
In the first case, $\oalpha=\{1110,0111,1011\}$ and $\obeta=\{-0110,-0011,
-1010\}$; furthermore, $|S(\alpha,\beta)|=6$. Using the formula in 
Definition~\ref{def1} one finds that $\heta(\alpha_0, \beta_0)=1$ for
all $(\alpha_0,\beta_0)\in S(\alpha, \beta)$. The second case is analogous.
\end{proof}

\begin{thm} \label{thm2} The set $\tcB^\epsilon=\{\tha_i \mid
i \in \tI\}\cup \{\tbe_\alpha^\epsilon\mid \alpha\in \tPhi\}$ is
the $\tilde{\epsilon}$-canonical Chevalley basis of $\tfg$, where
$\tilde{\epsilon}$ is the restriction of $\epsilon$ to~$\tI$. Let 
$\alpha,\beta\in \Phi$ be such that $\talpha+\tbeta$ is a root for
$\tfg$. Replacing $\beta$ by $\beta^\prime$ or $\beta^{\prime\prime}$
if necessary, we may assume without loss of generality that $\alpha+
\beta \in \Phi$. Then we have
\[ [\tbe_\alpha^\epsilon,\tbe_\beta^\epsilon]=\heta(\alpha,\beta)
(\tq_{\alpha,\beta}+1)\tbe_{\alpha+\beta}^\epsilon,\]
where $\tq_{\alpha,\beta}:=\max\{m \in \Z_{\geq 0} \mid \tbeta-m\talpha
\mbox{ is a root for $\tfg$}\}$ and $\heta(\alpha,\beta)=\pm 1$ is
as in Theorem~\ref{thm1}.
\end{thm}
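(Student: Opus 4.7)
The plan is to prove Part 2 (the structure-constant formula) first and then deduce Part 1 via the uniqueness of the canonical basis in \ref{cb4}. The starting point is the direct expansion
\[[\tbe_\alpha^\epsilon,\tbe_\beta^\epsilon]=\sum_{\alpha_0\in\oalpha,\,\beta_0\in\obeta}[\be_{\alpha_0}^\epsilon,\be_{\beta_0}^\epsilon].\]
Since $\talpha+\tbeta\neq 0$, no pair satisfies $\alpha_0=-\beta_0$, so the non-vanishing contributions are precisely those $(\alpha_0,\beta_0)\in S(\alpha,\beta)$ from Lemma~\ref{lemheta}. Each such bracket equals $\heta(\alpha_0,\beta_0)\be_{\alpha_0+\beta_0}^\epsilon$ by Theorem~\ref{thm1} (the factor $q_{\alpha_0,\beta_0}+1=1$ in the simply-laced case), and Lemma~\ref{lemheta} collapses all these signs to $\heta(\alpha,\beta)$. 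Property $(\dagger)$ from the proof of Lemma~\ref{lemheta} places every sum $\alpha_0+\beta_0$ inside the orbit $\underline{\alpha+\beta}$, and $\tau$-equivariance of the map $(\alpha_0,\beta_0)\mapsto \alpha_0+\beta_0$, together with transitivity of $\tau$ on $\underline{\alpha+\beta}$, forces every $\gamma$ in that orbit to be attained the same number of times, say $k$. This gives
\[[\tbe_\alpha^\epsilon,\tbe_\beta^\epsilon]=\heta(\alpha,\beta)\,k\,\tbe_{\alpha+\beta}^\epsilon.\]

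The main task is then to identify $k$ with $\tq_{\alpha,\beta}+1$, and I would do this by applying \ref{chevb} to $\tfg$ rather than by direct combinatorial counting. The hypotheses of \ref{chevb} are all available for $\tcB^\epsilon$: the element $\tbe_\alpha^\epsilon$ spans the root space $\tfg_\talpha$ (Lemma~\ref{auto6}); $[\tbe_\alpha^\epsilon,\tbe_{-\alpha}^\epsilon]=(-1)^{\hgt(\alpha)}\tha_\alpha^\epsilon$ is $\pm$ the co-root of $\talpha$ in $\tfg$ (Corollary~\ref{auto6a}(b)); and the automorphism $\omega$ of $\fg$ commutes with $\tau$ (as already observed in the proof of Proposition~\ref{prop34}), hence restricts to an automorphism of $\tfg=\fg^\tau$ which satisfies $\omega(\tbe_\alpha^\epsilon)=-\tbe_{-\alpha}^\epsilon$ by termwise application of $\omega(\be_\beta^\epsilon)=-\be_{-\beta}^\epsilon$. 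Consequently, \ref{chevb}$(*)$ holds in $\tfg$, giving $|N^{\tilde{\epsilon}}_{\alpha,\beta}|=\tq_{\alpha,\beta}+1$. Comparing with the identity above (where $k$ is a positive integer and $\heta(\alpha,\beta)=\pm 1$), we conclude $k=\tq_{\alpha,\beta}+1$ and the sign of $N^{\tilde{\epsilon}}_{\alpha,\beta}$ is $\heta(\alpha,\beta)$.

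Part 1 then drops out from the uniqueness of the $\tilde{\epsilon}$-canonical basis in \ref{cb4}. The simple-root identities $\tbe_{\alpha_i}^\epsilon=\tilde{\epsilon}(i)\te_i$ and $\tbe_{-\alpha_i}^\epsilon=-\tilde{\epsilon}(i)\tf_i$ are immediate from the definition of $\tbe_\alpha^\epsilon$ in \ref{auto4} and the constancy of $\epsilon$ on orbits (\ref{auto1}). The inductive relation $[\te_i,\tbe_\alpha^\epsilon]=(\tq_{\talpha_i,\talpha}+1)\tbe_{\alpha+\alpha_i}^\epsilon$ follows from the structure-constant formula applied with $\alpha_i$ in place of $\alpha$: writing $\te_i=\tilde{\epsilon}(i)\tbe_{\alpha_i}^\epsilon$ and using $\heta(\alpha_i,\alpha)=\epsilon(i)=\tilde{\epsilon}(i)$ (Example~\ref{def1a}) together with $\tilde{\epsilon}(i)^2=1$, the two sign factors cancel; the $\tf_i$ relation is entirely analogous. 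The main obstacle is the multiplicity identification $k=\tq_{\alpha,\beta}+1$: a direct combinatorial count of $S(\alpha,\beta)$ in terms of $\tilde{\Phi}$-string data would force a case-by-case analysis across $B_n,C_n,F_4,G_2$, whereas the appeal to \ref{chevb} inside $\tfg$ bypasses this at the modest cost of the $\omega$-compatibility check outlined above.
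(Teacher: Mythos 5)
Your proposal is correct and takes essentially the same route as the paper: expand the bracket over the orbit pairs $S(\alpha,\beta)$, use Theorem~\ref{thm1} together with Lemma~\ref{lemheta} to get the common sign $\heta(\alpha,\beta)$, apply the criterion of \ref{chevb} inside $\tfg$ (via $\omega\circ\tau=\tau\circ\omega$, $\omega(\tbe_\alpha^\epsilon)=-\tbe_{-\alpha}^\epsilon$ and Corollary~\ref{auto6a}) to pin the magnitude down to $\tq_{\alpha,\beta}+1$, and then verify the relations of \ref{cb4} to identify $\tcB^\epsilon$ as the $\tilde{\epsilon}$-canonical basis. The only cosmetic differences are the order of the sign/magnitude steps and that you obtain the collapse to a single multiple of $\tbe_{\alpha+\beta}^\epsilon$ by a $\tau$-equivariant fiber count, where the paper simply notes that the bracket lies in the one-dimensional root space $\tfg_{\talpha+\tbeta}$ (Lemma~\ref{auto6}) and compares the two expansions.
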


\begin{proof} Let $\alpha,\beta\in \Phi$ be such that $\talpha+\tbeta$ 
is a root for $\tfg$. Then 
\begin{equation*}
[\tbe_\alpha^\epsilon,\tbe_\beta^\epsilon]=\sum_{\alpha_0 \in \oalpha} 
\sum_{\beta_0\in \obeta} [\be_{\alpha_0}^\epsilon,\be_{\beta_0}^\epsilon]
=\sum_{(\alpha_0,\beta_0)\in S(\alpha,\beta)} N_{\alpha_0,\beta_0}^\epsilon
\be_{\alpha_0+ \beta_0}^\epsilon,\tag{a}
\end{equation*}
with $S(\alpha,\beta)$ as in Lemma~\ref{lemheta}. Since $\talpha+\tbeta$ 
is a root for~$\tfg$, the left hand side of~(a) is non-zero. So there must 
exist some $(\alpha_0,\beta_0)\in S(\alpha,\beta)$ such that $\alpha_0+
\beta_0\in \Phi$. Note that then we also have $(\alpha_0^\prime,
\beta_0^\prime)\in S(\alpha, \beta)$ and $(\alpha_0^{\prime\prime},
\beta_0^{\prime\prime})\in S(\alpha,\beta)$. Hence, replacing~$\beta$ 
by $\beta^\prime$ or $\beta^{\prime \prime}$ if necessary, we may assume
without loss of generality that $\alpha+\beta \in \Phi$. In this case, we 
can write 
\[[\tbe_\alpha^\epsilon,\tbe_\beta^\epsilon]=\tilde{N}_{\alpha,
\beta}^\epsilon \tbe_{\alpha+\beta}^\epsilon\qquad \mbox{where}\qquad
\tilde{N}_{\alpha,\beta}^\epsilon\in \C.\] 
As a first step, we show that $\tcB^\epsilon$ is a Chevalley basis of
$\tfg$. (This will determine the absolute value of $\tilde{N}_{\alpha,
\beta}^\epsilon$.) For this purpose, we use the criterion in \ref{chevb} 
and consider the automorphism $\omega \colon \fg \rightarrow \fg$. As already
noted in the proof of Proposition~\ref{prop34}, the maps $\omega$ and $\tau$ 
commute with each other. Hence, $\omega(\tfg)=\tfg$ and so $\omega$ restricts
to an automorphism of $\tfg$ which we denote by the same symbol. One
immediately checks that $\omega(\te_i)=\tf_i$, $\omega(\tf_i)=\te_i$ and
$\omega(\tha_i)=-\tha_i$ for $i \in \tI$. So, by \ref{cb4}, we have
$\omega(\be_\alpha^\epsilon)= -\be_{-\alpha}^\epsilon$ for all 
$\alpha\in \Phi$. Hence, we obtain
\[ \omega(\tbe_\alpha^\epsilon)=\sum_{\beta \in \oalpha}
\omega(\be_\alpha^\epsilon)=\sum_{\beta \in \oalpha} 
-\be_{-\alpha}^\epsilon=-\tbe_{-\alpha}^\epsilon.\]
By Corollary~\ref{auto6a}(b) (and its proof), we also have 
$[\tbe_\alpha^\epsilon, \tbe_{-\alpha}^\epsilon]=(-1)^{\hgt(\alpha)}
\tha_\alpha$. Hence, the conditions in \ref{chevb} are satisfied and
so $(\tilde{N}_{\alpha, \beta}^\epsilon)^2=\pm (\tq_{\alpha,\beta}+1)^2$ 
whenever $\alpha,\beta, \alpha+\beta \in \Phi$. Finally, we already noted 
in \ref{auto4} that the complete multiplication table of~$\tfg$ with
respect to~$\tcB^\epsilon$ has only entries in~$\Q$; in particular, 
$\tilde{N}_{\alpha,\beta}^\epsilon \in \Q$ and, hence, $\tilde{N}_{\alpha,
\beta}^\epsilon=\pm (\tq_{\alpha,\beta}+1)$, as required. (The above argument
essentially appeared already in the proof of \cite[Theorem~3.2.26]{mitz}.)

Now let again $\alpha,\beta\in \tPhi$ be arbitrary such that $\talpha+
\tbeta$ is a root for $\tfg$. As before, we may assume without loss of 
generality that $\alpha+\beta\in \Phi$. Since $\tcB^\epsilon$ is a Chevalley
basis, there is a sign $\xi=\pm 1$ such that $\tilde{N}_{\alpha,
\beta}^\epsilon=\xi (\tq_{\alpha,\beta}+1)$. Then 
\begin{equation*}
[\tbe_\alpha^\epsilon,\tbe_\beta^\epsilon]=\tilde{N}_{\alpha,\beta}^\epsilon
\tbe_{\alpha+\beta}^\epsilon=\xi (\tq_{\alpha,\beta}+1)\sum_{\gamma\in 
\underline{\alpha{+}\beta}} \be_{\gamma}^\epsilon. \tag{b} 
\end{equation*}
In order to determine $\xi=\pm 1$, we compare the two expressions (a) 
and~(b). All coefficients of basis elements in those expressions are 
in~$\Z$, and~(b) shows that they all have the same sign, given by~$\xi$. On 
the other hand, by Theorem~\ref{thm1} and Lemma~\ref{lemheta}, all 
coefficients $N_{\alpha_0, \beta_0}$ in the sum in~(a) are equal to
$\heta(\alpha,\beta)$. Hence, there are no cancellations in~(a) and so 
$\xi=\heta(\alpha,\beta)$; furthermore, 
\begin{equation*}
\tq_{\alpha,\beta}+1\,=\, |\{(\alpha_0,\beta_0) \in S(\alpha,\beta)\mid
\alpha_0+ \beta_0=\alpha+\beta\}|. \tag{c}
\end{equation*}
Thus, we have shown that 
\[ [\tbe_\alpha^\epsilon,\tbe_\beta^\epsilon]=\heta(\alpha,\beta)
(\tq_{\alpha,\beta}+1)\tbe_{\alpha+\beta}^\epsilon\qquad\mbox{whenever
$\alpha,\beta,\alpha+\beta\in \Phi$}.\]
Finally, we show that $\tcB^\epsilon$ is the $\tilde{\epsilon}$-canonical 
Chevalley basis of $\tfg$; recall the conditions from \ref{cb4}. First let 
$i \in \tI$. Then $\tbe_{\alpha_i}^\epsilon=\sum_{j \in \ovi} 
\be_{\alpha_i}^\epsilon= \sum_{j \in \ovi} \epsilon(j) e_j$. Since 
$\epsilon$ is constant on the orbits of the permutation 
$i\mapsto i^\prime$, we conclude that $\tbe_{\alpha_i}^\epsilon=
\tilde{\epsilon}(i)\te_i$. The argument for $\tbe_{-\alpha_i}^\epsilon$ 
is completely analogous. Now let also $\alpha\in \tPhi$ be such that 
$\talpha_i+\talpha$ is a root for $\tfg$; as before, we may assume without
loss of generality that $\alpha_i+\alpha\in \Phi$. By the above formula
we know that 
\[[\tilde{e}_i,\tbe_\alpha^\epsilon]=\epsilon(i)[\tbe_{\alpha_i}^\epsilon,
\tbe_\alpha^\epsilon]=\epsilon(i)\heta(\alpha_i,\alpha)
(\tq_{\alpha_i,\alpha}+1) \tbe_{\alpha_i+\alpha}^\epsilon=
(\tq_{\alpha_i,\alpha}+1) \tbe_{\alpha_i+\alpha}^\epsilon,\]
where the last equality holds by Example~\ref{def1a}. The argument 
for $[\tilde{f}_i,\tbe_\alpha^\epsilon]$ is completely analogous (assuming
that $\talpha-\talpha_i$ is a root for $\tfg$).
\end{proof}

\begin{rem} \label{thm2a} Let $\alpha,\beta,\alpha+\beta\in \Phi$.
By the detailed discussion in the proof of \cite[Lemma~5.15.9]{graaf},
one can explicity work out the number $\tq_{\alpha,\beta}$ according to 
formula~(c) in the above proof. Examples: If $\alpha=\alpha^\prime$ or 
$\beta=\beta^\prime$, then $\tq_{\alpha,\beta}=0$. Now assume that 
$\alpha\neq \alpha^\prime$ and $\beta\neq \beta^\prime$. If $\alpha+\beta=
\alpha^\prime+\beta^\prime$, then $\tq_{\alpha,\beta}=d-1$ where~$d$ is 
the order of~$\tau$; if $\alpha+\beta\neq \alpha^\prime+ \beta^\prime$ 
and $d=2$, then $\tq_{\alpha,\beta}=0$. This actually covers all cases 
for $d=2$. If $d=3$, then we are in the situation of Example~\ref{typeD4}; 
the structure constants for $\tfg$ in this case are explicitly listed 
in Table~1 (p.~3246) of~\cite{mylie}.
\end{rem}


\end{document}